
\documentclass[11pt]{scrartcl}
\usepackage[utf8]{inputenc}
\usepackage[T1]{fontenc}
\usepackage{color}
\usepackage{enumerate}
\usepackage{titling}
\usepackage{relsize,xspace}
\usepackage[usenames,dvipsnames]{xcolor}

\usepackage{microtype}
\usepackage{amsmath}
\usepackage{amssymb}
\usepackage{amsfonts}
\usepackage{stmaryrd}
\usepackage{tikz}
\usepackage{bm}
\usepackage{algorithm}
\usepackage{algpseudocode}
\usepackage{todonotes}
\usepackage{dsfont}
\usepackage{nicefrac}
\usepackage[inline]{enumitem}
\setlist[itemize]{topsep=0pt,partopsep=0pt,itemsep=0pt,parsep=0pt}
\setlist[itemize,1]{label=\textbullet}
\setlist[itemize,2]{label=---}
\setlist[itemize,3]{label=*}
\setlist[enumerate]{topsep=0pt,partopsep=0pt,itemsep=0pt,parsep=0pt}
\setlist[enumerate,1]{label=\textit{(\roman*)}}
\setlist[enumerate,2]{label=(\alph*)}
\setlist[enumerate,3]{label=(\arabic*)}

\usetikzlibrary{calc,through,arrows,arrows.meta,automata,decorations.pathmorphing,decorations.pathreplacing,shapes,backgrounds,fit,positioning,shapes.geometric,patterns}
\usetikzlibrary{decorations.text}
\usetikzlibrary{decorations}
\tikzset{
	position/.style args={#1:#2 from #3}{
		at=($(#3)+(#1:#2)$)
	},
	v:main/.style = {draw, circle, scale=0.6, thick},
	v:matching/.style = {draw, circle, scale=0.6, thick,fill=matEdgeColour},
	e:undir/.style = {draw,line width=1pt},
	e:dir/.style = {draw,->,line width=1.3pt}
}

\pgfdeclarelayer{bg}    
\pgfsetlayers{bg,main}  

\definecolor{blackblue}{rgb}{0,0.18,0.39}
\usepackage[ocgcolorlinks, allcolors={blackblue}]{hyperref}

\definecolor{magenta}{rgb}{0.79, 0.08, 0.48}
\definecolor{AO}{rgb}{0.0, 0.5, 0.0}
\definecolor{phthaloblue}{rgb}{0.0, 0.06, 0.54}
\definecolor{pistachio}{rgb}{0.58, 0.77, 0.45}
\definecolor{darkgoldenrod}{rgb}{0.72, 0.53, 0.04}
\definecolor{matEdgeColour}{rgb}{0.52, 0.73, 0.4}
\colorlet{matEdgeColourText}{matEdgeColour!60!black}
\definecolor{myBlue}{rgb}{0.25, 0.0, 1.0}
\definecolor{myRed}{rgb}{0.7, 0.11, 0.11}
\definecolor{myGreen}{rgb}{0.13, 0.55, 0.13}
\definecolor{typecol}{rgb}{0.13, 0.55, 0.13}
\colorlet{myViolet}{myBlue!55!myRed}

\usepackage[amsmath,thmmarks,hyperref]{ntheorem}
\usepackage{cleveref}
\usepackage{macros}

\usepackage[utf8]{inputenc}

\usepackage{latexsym}

\title{Cyclewidth and the Grid Theorem for\\ Perfect Matching Width\\ of Bipartite Graphs\thanks{This work has been supported by the European Research Council (ERC) under the European Union’s Horizon 2020 research and innovation programme (ERC consolidator grant DISTRUCT, agreement No 648527).}}
\predate{}
\date{}
\postdate{}

\preauthor{}
\DeclareRobustCommand{\authorthing}{
	\begin{center}
		\begin{tabular}{p{0.18\textwidth}p{.25\textwidth}p{.25\textwidth}}
                  Meike Hatzel & Roman Rabinovich & Sebastian Wiederrecht\\
		\end{tabular}
		
		\{meike.hatzel, roman.rabinovich, sebastian.wiederrecht\}@tu-berlin.de\\
		Technische Universität Berlin
\end{center}}
\author{\authorthing}
\postauthor{}
\begin{document}
	
	\maketitle
	
	\setcounter{page}{1}
	
	\begin{abstract}
          A connected graph $G$ is called \emph{matching covered} if
          every edge of $G$ is contained in a perfect matching.
          \emph{Perfect matching width} is a width parameter for
          matching covered graphs based on a branch decomposition.  It
          was introduced by Norine and intended as a tool for the
          structural study of matching covered graphs, especially in
          the context of Pfaffian orientations.  Norine conjectured
          that graphs of high perfect matching width would contain a
          large grid as a matching minor, similar to the result on
          \treewidth by Robertson and Seymour.
		
          In this paper we obtain the first results on perfect
          matching width since its introduction.  For the restricted
          case of bipartite graphs, we show that perfect matching
          width is equivalent to directed \treewidth and thus the
          Directed Grid Theorem by Kawarabayashi and Kreutzer for
          directed \treewidth implies Norine's conjecture.
		
		\noindent \textbf{Keywords.} Branch Decomposition; Perfect Matching; Directed \Treewidth; Matching Minor
	\end{abstract}

\section{Introduction}
	
	The concept of width-parameters, or decompositions of graphs into tree like structures has proven to be a powerful tool in both structural graph theory and for coping with computational intractability.
	The shining star among these concepts is the \emph{\treewidth} of undirected graphs introduced in its popular form in the Graph Minor series by Robertson and Seymour (see \cite{graphminorproject}).
	
	Tree decompositions are a way to decompose a given graph into
	loosely connected small subgraphs of bounded size that, in many algorithmic applications, can be dealt with individually instead of considering the graph as a whole.
	This concept allows the use of dynamic programming and other
	techniques to solve many hard computational problems, see for example~\cite{bodlaender1996linear,bodlaender1997treewidth,bodlaender2005discovering,downey2016fundamentals}.
	
	\Treewidth was also successfully applied for non-algorithmic problems:
	in the famous Graph Minor project by Robertson and Seymour \treewidth plays a key role, in model
	theory~\cite{Gr99d,BaranytCatSeg11,SegoufinCate13,benediktBouBoo2017},
	or in the proofs of the (general) Erd\H{o}s-P\'osa property for	undirected graphs~\cite{robertson1986graph}.
	
	In the latter result, the \emph{Grid theorem} plays an important role.
	It says	that if the \treewidth of a graph is big, then it has a large
	grid as a minor.
	Proven by Robertson and Seymour in 1986~\cite{robertson1986graph}, the
	Grid Theorem has given rise to many other interesting results.
	An example of those is the algorithm design principle called
	\emph{bidimensionality theory,} that, roughly, consists in
	distinguishing two cases for the given graph: small \treewidth or large grid minor (see \cite{demaine2007bidimensionality,demaine2004fast,fomin2010bidimensionality} for examples).
	
	As directed graphs pose a natural generalisation of graphs, soon the question arose whether a similar strategy would be useful for directed graphs and so Reed~\cite{reed1999directed} and Johnson et al.~\cite{johnson2001directed} introduced \emph{\dtwText} along with the conjecture of a directed version of the Grid Theorem.
	After being open for several years, the conjecture was finally proven
	by Kawarabayashi and Kreutzer~\cite{kawarabayashi2015directed}.
	Similarly to the undirected case, it implies the Erd\H{o}s-P\'{o}sa property for
	large classes of directed graphs~\cite{amiri2016erdos}.
	
	It is possible to go further and to consider even more general structures than directed graphs.
	One of the ways to do this is to characterise (strongly connected) directed graphs by pairs of undirected bipartite graphs and \emph{perfect matchings}.
	The generalisation (up to strong connectivity) is then to drop the condition on the graphs to be bipartite.
	There is a deep connection between the theory of \emph{matching minors} in matching covered graphs and the theory of \emph{butterfly minors} and strongly connected directed graphs.
	This connection can be used to show structural results on directed graphs by using matching theory (see \cite{mccuaig2000even,guenin2011packing}).
	
	The corresponding branch of graph theory was developed from the theory of
	\emph{tight cuts} and \emph{tight cut decompositions} of
	\emph{matching covered} graphs introduced by Kotzig, Lov{\'a}sz and
	Plummer~\cite{lovasz1987matching,lovasz2009matching,kotzig1959theory}.
	A graph is matching covered if it is connected and each of its edges is contained in a perfect matching.
	One of the main forces behind the development of the field is the question of Pfaffian orientations; see~\cite{mccuaig2004polya,thomas2006survey} for an overview on the subject.
	
	Matching minors can be used to characterise the bipartite Pfaffian graphs~\cite{mccuaig2004polya,robertson1999perma}. The characterisation  implies a polynomial time algorithm for the problem to decide whether a matching covered bipartite graph is Pfaffian or not.
	In addition, there are powerful generation methods for the building blocks of matching covered graphs obtained by the tight cut decomposition: the \emph{bricks} and \emph{braces}, which are an analogue of Tutte's theorem on the generation of $3$-connected graphs from wheels.
	Here the bipartite (brace) case was solved by McGuaig~\cite{mccuaig2001brace} while the non-bipartite (brick) case was solved by Norine in his PhD thesis~\cite{norine2005matching}.
	The goal of this thesis was to find an analogue to the bipartite characterisation of Pfaffian graphs in the non-bipartite case.
	However, while a bipartite matching covered graph turns out to be Pfaffian if and only if it does not contain $K_{3,3}$ as a matching minor, Norine discovered an infinite antichain of non-Pfaffian bricks.
	
	While no polynomial time algorithm for Pfaffian graphs is known,
	Norine defined a branch decomposition for matching covered graphs and
	found an algorithm that decides whether a graph from a class of bounded \emph{perfect matching width} is Pfaffian in \XP-time.
	This branch decomposition for matching covered graphs and perfect matching width are similar to \emph{branch decompositions} and \emph{branchwidth}, again introduced by Robertson and Seymour~\cite{robertson1991graph}.
	Norine and Thomas also conjectured a grid theorem for their new width parameter (see \cite{norine2005matching,thomas2006survey}).
	Based on the above mentioned ties between bipartite matching covered graphs and
	directed graphs, Norine conjectures in his thesis that the 
	Grid Theorem for digraphs, which was still open at that time, would at least imply the conjecture in the bipartite case.
	Whether perfect matching width and \dtwText could be seen as equivalent was unknown at that time.
	
	\paragraph*{Contribution.}
	
	We settle the \emph{Matching Grid Conjecture} for the bipartite case.
	To do so, in \cref{sec:dtw_cycw}, we construct a branch decomposition and a corresponding new width parameter for directed graphs: the \emph{\cyclewidth} and prove its equivalence to \dtwText.
	
	\Cyclewidth itself seems to be an interesting parameter as it appears to be more natural and gives further insight in the difference between undirected and \dtwText: while the undirected case considers local properties of the graph, the directed version is forced to have a more global point of view.
	We also prove that \cyclewidth is closed under butterfly minors, which
	is not true for \dtwText as shown by Adlern~\cite{adler2007directed}.
	
	The introduction of \cyclewidth leads to a straightforward proof of
	the Matching Grid Theorem for bipartite graphs. In \cref{sec:pmw} we show that \cyclewidth and perfect matching width are within a constant factor of each other. This immediately implies the Matching Grid Theorem for bipartite graphs. Our proofs are algorithmic and thus also imply an approximation algorithm for perfect matching width on bipartite graphs, which is the first known result on this matter.
	
	Norine proposes the quadratic
	planar grid as the right matching minor witnessing high perfect
	matching width as in the original Grid Theorem by Robertson and
	Seymour. In this work we give an argument that \emph{cylindrical grids} are a more natural grid-like structure in the context of matching covered
	graphs. Moreover, to better fit into the canonical matching theory, we
	would like our grid to be a brace. McGuaig proved that every brace
	either contains $K_{3,3}$, or the \emph{cube} as a matching minor (see
	\cite{mccuaig2001brace}). Additionally, one of the three infinite
	families of braces from which every brace can be generated are the
	even prisms (or planar ladders as McGuaig calls them in his paper) of
	which the cube is the smallest one.
	The \emph{bipartite matching grid,} which we define in this work can be seen
	as a generalisation of the even prism and again the cube is the
	smallest among them.

	Width parameters for directed graphs similar to directed treewidth are exclusively concerned with directed cycles.
	So, when studying \cyclewidth and related topics one might restrict themselves to strongly connected digraphs.
	Similarly, an edge that is not contained in any perfect matching is, in most cases, irrelevant for the matching theoretic properties of the graph.
	For this reason it is common to only consider matching covered graphs as this does not pose a loss of generality.
	
	Finally, we show that the perfect matching widths of a
        matching covered bipartite graph and of any matching minor of
        the graph are within a constant factor of each other.
	
	Let us remark that we took the freedom to rename Norine's  \emph{matching-width}~\cite{norine2005matching} to \emph{perfect matching width} to better distinguish it from related parameters such as \emph{maximum matching width} (see \cite{jeong2015maximum}).

\section{Preliminaries}
        
	We consider finite graphs and digraphs without multiple edges and use
	standard notation (see~\cite{diestel2017graph}). For a
	graph $G$, its vertex set is denoted by $V(G)$ and its edge set by
	$E(G)$, and similarly for digraphs where we call arcs
        edges. For a (directed) tree $T$, we write $L(T)$ for the set
        of its leaves. 
	
	Let $X \subseteq V(G)$ be a non-empty set of vertices in a graph $G$. 
	The \emph{cut at $X$} is the set $\cut{X}\subseteq\E{G}$ of all edges
	joining vertices of $X$ to vertices of $\V{G}\setminus X$. 
	We call $X$ and $\V{G}\setminus X$ the \emph{shores} of
	$\cut{\Shore}$. A set $E\subseteq E(G)$ is a \emph{cut} if $E =
	\cut{X}$ for some $X$. Note that in connected graphs the shores are
	uniquely defined. In such cases, a cut is said to be \emph{odd} if both shores have odd cardinality and we call a cut \emph{trivial} if one of the two shores only contains one vertex.
	
	A \emph{matching} of a graph $G$ is a set $\Matching\subseteq\E{G}$ such that no two edges in $\Matching$ share a common endpoint.
	If $e=xy\in \Matching$, $e$ is said to \emph{cover} the two vertices $x$ and $y$.
	A matching $\Matching$ is called \emph{perfect} if every vertex of $G$ is covered by an edge of $\Matching$.
	We denote by $\perf{G}$ the set of all perfect matchings of a graph $G$.
	A restriction of a matching $M$ to a set $S\subseteq\V{G}$ or to a
	subgraph $G'\subseteq G$ is defined by $\reduct{\Matching}{S} \coloneqq \condset{xy \in \Matching}{x,y \in S}$ and $\reduct{\Matching}{G'} \coloneqq \reduct{\Matching}{\V{G'}}$.
	
	\begin{definition}
		\label{def:Mdirection}
		Let $G=\br{A\cup B, E}$ be a bipartite graph and let $\Matching\in\perf{G}$ be a perfect matching of $G$. 
		The \emph{$\Matching$-direction} $\dirm{G}{\Matching}$ of $G$ is defined as follows (see \cref{fig:exMdirection} for an illustration).
		Let $\Matching = \Set{a_1b_1,\dots,a_{\Abs{\Matching}}b_{\Abs{\Matching}}}$
		with $a_i\in A, b_i\in B$ for $1\le i\le \Abs{\Matching}$.
		Then,
		\begin{enumerate}
			\item $\V{\dirm{G}{\Matching}}\define\Set{v_1,\dots,v_{\Abs{\Matching}}}$ and
			
			\item $\E{\dirm{G}{\Matching}}\define\condset{\br{v_i,v_j}}{a_ib_j\in\E{G}}$.	
		\end{enumerate}
	\end{definition}
	
	\begin{figure}[ht]
		\begin{center}
			\begin{tikzpicture}
			
			\node (centre) [] {};
			
			\node (anchorG) [position=180:3cm from centre] {};
			
			\node(aGleft) [position=180:1cm from anchorG] {};
			
			\node(labelG) [position=139:23mm from aGleft] {$G$ and $\textcolor{matEdgeColourText}{\Matching}$};
			
			\foreach\i in {1,...,4} {
				\node [v:main, position=67.5+\i*90:12mm from aGleft] (w-\i) {};	
				\node [scale=1, position=\i*90+90:4mm from w-\i] (w-\i-Label) {$b_{\i}$};
				\node [v:main, position=112.5+\i*90:12mm from aGleft] (u-\i) {};
				\node [scale=1, position=\i*90+90:4mm from u-\i] (u-\i-Label) {$a_{\i}$};
			}
			
			\node(aGright) [position=0:12mm from anchorG] {};
			
			\foreach\i in {5,...,7} { 
				\node [draw, circle, scale=0.6, thick, position=202.5+\i*90:12mm from aGright] (w-\i) {};	
				\node [scale=1, position=\i*90+180:4mm from w-\i] (w-\i-Label) {$b_{\i}$};
				\node [draw, circle, scale=0.6, thick, position=157.5+\i*90:12mm from aGright] (u-\i) {};
				\node [scale=1, position=\i*90+180:4mm from u-\i] (u-\i-Label) {$a_{\i}$};
			}
			
			\node [scale=0.6, thick, position=202.5+8*90:12mm from aGright] (w-8) {};	
			\node [scale=0.6, thick, position=157.5+8*90:12mm from aGright] (u-8) {};
			
			\node (anchorD) [position=0:4cm from centre] {};
			
			\node(aDleft) [position=180:1cm from anchorD] {};
			
			\node(labelD) [position=139:23mm from aDleft] 
			{$\dirm{G}{{\textcolor{matEdgeColourText}{\Matching}}}$};		
			
			\foreach\i in {1,...,4} {
				\node [v:matching, position=90+\i*90:9mm from aDleft] 
				(v-\i) {};	
				\node [scale=1, position=\i*90+90:4mm from v-\i] (v-\i-Label) {$v_{\i}$};	
			}
			
			\node(aDright) [position=0:8mm from anchorD] {};
			
			\foreach\i in {5,...,7} { 
				\node [v:matching, position=180+\i*90:9mm from aDright] 
				(v-\i) {};	
				\node [scale=1, position=\i*90+180:4mm from v-\i] (v-\i-Label) {$v_{\i}$};		
			}
			
			\node [scale=0.6, thick, position=180+8*90:9mm from aDright] (v-8) {};

			\begin{pgfonlayer}{bg}

			\foreach\i in {1,...,7} {
				\drawMatEdge{u-\i}{w-\i};
			}
			
			\foreach\i in {1,...,4} {
				\pgfmathtruncatemacro{\iNext}{Mod(\i,4)+1};
				\path (u-\i) [e:undir] edge (w-\iNext);	
				\pgfmathtruncatemacro{\iAct}{\i+4};
				\pgfmathtruncatemacro{\iNextA}{Mod(\i,4)+5};
				\path (w-\iAct) [e:undir] edge (u-\iNextA);			
			}

			\foreach\i in {1,...,4} {
				\pgfmathtruncatemacro{\iNext}{Mod(\i,4)+1};
				\path[>=latex] (v-\i) edge[e:dir] (v-\iNext);
				\pgfmathtruncatemacro{\iAct}{\i+4};
				\pgfmathtruncatemacro{\iNextA}{Mod(\i,4)+5};
				\path[>=latex] (v-\iNextA) edge[e:dir] (v-\iAct);		
			}		
			
			\end{pgfonlayer}

			\end{tikzpicture}
			
		\end{center}
		\caption{A bipartite graph $G=\br{A\cup B,E}$ with perfect matching 
			${\textcolor{matEdgeColourText}{\Matching}}$ and its ${\textcolor{matEdgeColourText}{\Matching}}$-direction.}
		\label{fig:exMdirection}
	\end{figure}
	
	Thus, the \emph{$\Matching$-direction} $\dirm{G}{\Matching}$ of $G$ is
	defined by contracting the edges of $\Matching$, and orienting the
	edges of $G$ that do not belong to $\Matching$ from $A$ to $B$. 
	
	A graph $G$ is called \emph{matching covered} if $G$ is connected and for every edge $e \in \E{G}$ there is an $\Matching \in \perf{G}$ with $e \in \Matching$. 
	
	The following is a well known observation on $M$-directions.
	
	\begin{observation}\label{obs:strongmdirections}
		A digraph $D$ is strongly connected if and only if there is an, up to
		isomorphism unique, pair consisting of a bipartite matching covered graph $G$ and a perfect matching $\Matching\in\perf{G}$ such that $D$ is isomorphic to $\dirm{G}{\Matching}$.
	\end{observation}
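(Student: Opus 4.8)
The plan is to introduce an explicit construction inverse to the $\Matching$-direction and to read all three assertions — existence, membership in the right class, and uniqueness — off from it. For a digraph $D$, define its \emph{bipartite split} $\mathrm{Split}(D)$ as the bipartite graph with parts $A=\condset{a_v}{v\in\V{D}}$ and $B=\condset{b_v}{v\in\V{D}}$ whose edge set consists of the \emph{matching edges} $a_vb_v$ for $v\in\V{D}$ together with an edge $a_ub_w$ for every $\br{u,w}\in\E{D}$; put $\Matching_D\define\condset{a_vb_v}{v\in\V{D}}\in\perf{\mathrm{Split}(D)}$. Relabelling the contracted edge $a_vb_v$ of $\mathrm{Split}(D)$ by $v$ shows $\dirm{\mathrm{Split}(D)}{\Matching_D}\cong D$, and conversely, for any bipartite $G=\br{A\cup B,E}$ with $\Matching=\Set{a_1b_1,\dots,a_kb_k}$ the map $a_{v_i}\mapsto a_i,\ b_{v_i}\mapsto b_i$ is an isomorphism $\mathrm{Split}(\dirm{G}{\Matching})\cong G$ carrying $\Matching_{\dirm{G}{\Matching}}$ to $\Matching$, because every edge of $G$ has the form $a_ib_j$ and lies in $\Matching$ precisely when $i=j$. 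Thus $\mathrm{Split}(-)$ and $\dirm{-}{-}$ are mutually inverse up to isomorphism; in particular, two pairs with the same $\Matching$-direction are each isomorphic to the split of that common digraph, hence to one another, which will settle the uniqueness statement once the two directions of the equivalence are established.

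Both directions will rest on the standard fact that contracting the matching edges of $G$ turns the $\Matching$-alternating cycles of $G$ into the directed cycles of $\dirm{G}{\Matching}$ (the converse being analogous): an $\Matching$-alternating cycle $a_{p_1}b_{p_1}a_{p_2}b_{p_2}\cdots a_{p_r}b_{p_r}a_{p_1}$ with $a_{p_t}b_{p_t}\in\Matching$ has non-matching edges $a_{p_{t+1}}b_{p_t}$, which yield the arcs $\br{v_{p_{t+1}},v_{p_t}}$ of a directed cycle on $v_{p_1},\dots,v_{p_r}$. For ``matching covered $\Rightarrow$ strongly connected'' I would set $D\define\dirm{G}{\Matching}$, take an arbitrary arc $\br{v_i,v_j}$ of $D$ coming from a non-matching edge $a_ib_j\in\E{G}$, choose $N\in\perf{G}$ with $a_ib_j\in N$, and note that $\Matching\triangle N$ is a disjoint union of $\Matching$-alternating cycles one of which contains $a_ib_j$; by the fact above $\br{v_i,v_j}$ then lies on a directed cycle of $D$. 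Since $G$ is connected, its contraction along $\Matching$ — the underlying multigraph of $D$ — is connected as well, and a digraph whose underlying graph is connected and in which every arc lies on a directed cycle is strongly connected (the ends of each arc being mutually reachable); hence $D$ is strongly connected.

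For ``strongly connected $\Rightarrow$ such a pair exists'', given a strongly connected $D$ I would take the pair $\br{\mathrm{Split}(D),\Matching_D}$, for which $\dirm{\mathrm{Split}(D)}{\Matching_D}\cong D$ already holds, and verify that $\mathrm{Split}(D)$ is matching covered. It is connected: a directed $u$--$w$ path $u=x_0,x_1,\dots,x_m=w$ lifts to the walk $a_ub_{x_1}a_{x_1}b_{x_2}\cdots b_{x_m}a_{x_m}$, so by strong connectivity every vertex of $\mathrm{Split}(D)$ is reachable from a fixed $a_u$. The matching edges lie in $\Matching_D$; and a non-matching edge $a_ub_w$, coming from an arc $\br{u,w}$, lies on a directed cycle $d_0=u,\,d_1=w,\,d_2,\dots,d_{\ell-1},\,d_\ell=d_0$, so $\condset{a_{d_t}b_{d_{t+1}}}{0\le t<\ell}$, extended by the matching edges $a_vb_v$ for $v\notin\Set{d_0,\dots,d_{\ell-1}}$, is a perfect matching of $\mathrm{Split}(D)$ containing $a_ub_w$. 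Hence $\mathrm{Split}(D)$ is matching covered.

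I expect no essential obstacle; the one point to watch is that the displayed definition of $\dirm{G}{\Matching}$, read literally, also produces a loop $\br{v_i,v_i}$ at $v_i$ from each matching edge $a_ib_i$, so it should be understood — as the accompanying text indicates — with only the non-matching edges of $G$ oriented, under which reading all of the above isomorphisms hold verbatim. The real content is simply identifying the right inverse construction $\mathrm{Split}$ and invoking the correspondence between $\Matching$-alternating cycles and directed cycles.
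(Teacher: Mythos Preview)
The paper does not prove this statement at all: it is introduced as ``a well known observation on $M$-directions'' and left without argument, so there is no approach in the paper to compare yours against. Your proof is correct and complete; the bipartite split construction you call $\mathrm{Split}(D)$ is exactly the standard inverse to the $M$-direction, and your use of the correspondence between $M$-alternating cycles in $G$ and directed cycles in $\dirm{G}{M}$ cleanly handles both implications. The closing remark about loops is a fair reading of the formal definition versus the prose description, and your interpretation is the intended one.
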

	
	A set $S \subseteq \V{G}$ of vertices is called \emph{\conformal} if $G-S$ has a perfect matching.
	Given a matching $\Matching\in\perf{G}$, a set $S\subseteq \V{G}$ is called \emph{$\Matching$-\conformal} if $\reduct{\Matching}{G-S}$ is a perfect matching of $G-S$ and
	$\reduct{\Matching}{S}$ is a perfect matching of $\induc{G}{S}$.
	A subgraph $H\subseteq G$ is \emph{\conformal}  if $\V{H}$ is a \conformal set.
	$H$ is called \emph{$\Matching$-conformal} for a perfect matching $\Matching\in\perf{G}$  if $H$ is conformal and $\reduct{\Matching}{H}$ is a perfect matching of $H$.
	
	If a cycle $C$ is $\Matching$-\conformal, there is another perfect matching $\Matching'\neq \Matching$ with $\E{C} \setminus \Matching \subseteq \Matching'$.
	Hence, if needed, we say $C$ is \emph{$\Matching$-$\Matching'$-\conformal} to indicate that $\Matching$ and $\Matching'$ form a partition of the edges of $C$.
	
	\section{Directed \Treewidth and \Cyclewidth}
	\label{sec:dtw_cycw}
	
	Directed \treewidth is, similar to \treewidth on undirected graphs, an important tool in the structure theory of directed graphs.
	For \treewidth there is an equivalent (up to a constant factor) concept of \emph{branch-width} that also allows a structural comparison of an undirected graph to a tree.
	To the best of our knowledge, so far there is no branch decomposition like concept for digraphs that can be seen as an equivalent to directed \treewidth.
	For the proof of our main result, we introduce such a decomposition, which we call \emph{\cyclewidth} and prove some basic properties of it such as its equivalence to directed \treewidth and that it is closed under butterfly minors.
	
	This section is divided into two subsections.
	First we introduce \cyclewidth and show that it provides a lower bound on the directed \treewidth with a linear function.
	Then, in a second step, we show that \cyclewidth is bounded from below by the \cyclewidth of its butterfly minors and, moreover, that large cylindrical grids have large \cyclewidth.
	The Directed Grid Theorem implies that there exists a function that bounds the \cyclewidth of a digraph from below by its directed \treewidth.
	
	\subsection{Cyclewidth: A Branch Decomposition for \Digraphs}
	
	We first recall the directed \treeDecomp by Reed~\cite{reed1999directed}, and Johnson et al.~\cite{johnson2001directed}.
	
	An \emph{arborescence} is a directed tree $T$ with a root $r_0$ and all edges directed away from $r_0$.
	For $r,r'\in\V{T}$ we say that $r'$ is \emph{below} $r$ and $r$ is \emph{above} $r'$ in $T$ and write $r'>r$ if $r'\neq r$ and $r'$ is reachable from $r$ in $T$.
	For $e\in\E{T}$ with head $r$ we write $r'>e$ if either $r'=r$, or $r'>r$.
	
	\begin{definition}\label{def:splitTree}
		Let $T$ be an arborescence or a rooted undirected tree and let $e \in \E{T}$.
		Then $\splitTree{T}{e} \coloneqq \br{T_1,T_2}$ where $T_1$ is the sub-arborescence (subtree) of $T-e$ containing the root of $T$ and $T_2$ is the other sub-arborescence (subtree) with the head of $e$ as the new root.
		By a slight abuse of notation an undirected tree $T$ (without a root) and an edge $e = tt'$ in it, we write $\splitTree{T}{e} \coloneqq \br{T_1,T_2}$ where $T_1$ is the subtree containing $t$ and $T_2$ the subtree containing $t'$ in $T-e$.
	\end{definition}
	
	Let $D$ be a \digraph and let $Z\subseteq\V{D}$.
	A set $S\subseteq\V{D}-Z$ is \emph{$Z$-normal} if
        there is no directed walk in $D-Z$ with the first and the last vertex in $S$ that uses a vertex of $D-\br{Z\cup S}$.
	
	\begin{definition}
		A \emph{directed \treeDecomp} of a \digraph $D$ is a
                triple $\br{T,\beta,\gamma}$, where~$T$ is an
                arborescence and $\beta\colon\V{T}\rightarrow2^{V\br{D}}$ and $\gamma\colon\E{T}\rightarrow2^{V\br{D}}$ are functions such that
		\begin{enumerate}
			\item $\condset{\fkt{\beta}{t}}{t\in\V{T}}$ is a
			partition of $\V{D}$ into possibly empty sets
                        (a near partition) and
			\item if $e\in\E{T}$, then $\bigcup\condset{\fkt{\beta}{t}}{t\in\V{T},t>e}$ is $\fkt{\gamma}{e}$-normal.
		\end{enumerate}
		For any $t\in\V{T}$ we define $\fkt{\Gamma}{t}\define\fkt{\beta}{t}\cup\bigcup\condset{\fkt{\gamma}{e}}{e\in\E{T},e \sim t}$, where $e\sim t$ if $e$ is incident with $t$.
		The \emph{width} of $\br{T,\delta,\gamma}$ is
                $\max_{t\in \V{T}}\Abs{\fkt{\Gamma}{t}}-1$.
		The \emph{directed \treewidth} $\dtw{D}$ of $D$ is the least
		integer $w$ such that $D$ has a directed \treeDecomp of width $w$.
		The sets $\fkt{\beta}{t}$ are called \emph{bags} and the sets $\fkt{\gamma}{e}$ are called the \emph{guards} of the directed \treeDecomp.
	\end{definition}

	We also apply the bag-function $\beta$ on subtrees instead of single vertices to refer to the union over all bags in the subtree, i.e.\@ $\fkt{\beta}{T'} \coloneqq \bigcup_{v \in \V{T}} \fkt{\beta}{v}$ for $T'$ being a subtree $T$.
	If a vertex $v$ of $D$ is contained in $\fkt{\beta}{T'}$ for some subtree $T'\subseteq T$, we say that $T'$ \emph{contains} $v$.
	
	Directed \treewidth is a generalisation of the undirected version \treewidth.
	Similar to the undirected case one can find certain structural obstructions witnessing that a digraph has high directed \treewidth.
	An important result among these is the Directed Grid Theorem by Kawarabayashi and Kreutzer~\cite{kawarabayashi2015directed}.
	It states, roughly, that a graph has high directed \treewidth if and
	only if it contains a large cylindrical grid as a butterfly minor.
	
	In order to formally state the Directed Grid Theorem, we need some further definitions.
	
	\begin{definition}[Butterfly Minor]
		\label{def:butterfly_minors}
		Let $D$ be a \digraph.
		An edge $e=\br{u,v}\in\E{D}$ is \emph{butterfly-contractible} if $e$ is the only outgoing edge of $u$ or the only incoming edge of $v$.
		
		In this case the graph $D'$ obtained from $D$ by \emph{butterfly contracting} $e$ is the graph with vertex set $\br{\V{D}\setminus\Set{u,v}}\cup\Set{x_{u,v}}$, where $x_{u,v}$ is a new vertex and the edge set
		\begin{align*}
			\E{D}&\setminus\CondSet{e}{e \sim u \text{ or } e \sim v}\\ 
			&\cup\CondSet{\Brace{w,x_{u,v}}}{\Brace{w,u} \in \E{D} \text{ or } \Brace{w,v} \in \E{D}}\\
			& \cup\CondSet{\Brace{x_{u,v},w}}{\Brace{u,w} \in \E{D} \text{ or } \Brace{v,w} \in \E{D}}.
		\end{align*}
		
		We denote the result of butterfly contracting an edge $e$ in $D$ by $D/e$.
		
		A \digraph $D'$ is a \emph{butterfly-minor} of $D$ if it can be obtained from a subgraph of $D$ by butterfly contractions.
	\end{definition}
	
	\begin{definition}[Cylindrical Grid]
		A \emph{cylindrical grid} $\CylGrid{k}$ of order $k$ consists of $k$ concentric directed cycles and $2k$ paths
		connecting the cycles in alternating directions, see \cref{fig:cyl_grid}.
		\begin{figure}[!h]
			\centering
			\begin{tikzpicture}[scale=0.4]
			\CylindricalGrid{6}{0}
			\foreach \i in {1,3,5,7,9,11}
			{
				\draw ({360/12 * \i}:6) edge[-latex,line width=1pt,myViolet] ({360/12 * \i}:1);
			}
			\foreach \i in {2,4,6,8,10,12}
			{
				\draw ({360/12 * \i}:6) edge[latex-,line width=1pt,myRed] ({360/12 * \i}:1);
			}
			\end{tikzpicture}
			\caption{A cylindrical grid of order 6.}
			\label{fig:cyl_grid}
		\end{figure}
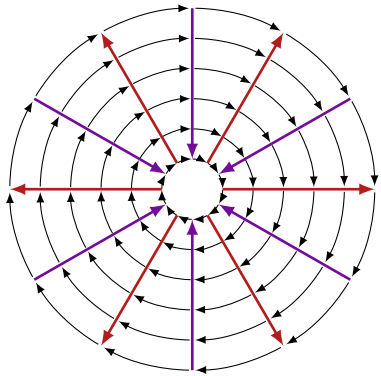
	\end{definition}
	
	Now we can state the Directed Grid Theorem.
	
	\begin{theorem}[Kawarabayashi and Kreutzer, 2015 \cite{kawarabayashi2015directed}]\label{thm:DGT}
		There is a function $f \colon\N \to \N$ such that every digraph $D$ either satisfies $\dtw{D}\leq\ f(k)$, or contains the cylindrical grid of order $k$ as a butterfly minor.
	\end{theorem}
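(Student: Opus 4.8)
This is the Directed Grid Theorem of \cite{kawarabayashi2015directed}; the plan is to follow the strategy of that paper, so what follows is a sketch of its proof.

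One direction is comparatively routine. The cylindrical grid $\CylGrid{k}$ carries a bramble (equivalently a haven) of order $\Omega(k)$, built from its $k$ concentric cycles together with the connecting paths. Moreover a bramble of a butterfly minor $D'$ of $D$ pulls back to a bramble of $D$ of at least the same order: expand each butterfly‑contracted vertex to the connected set it arose from, and note that every hitting set in $D$ projects onto a hitting set of $D'$ of no larger size. Hence any $D$ having $\CylGrid{k}$ as a butterfly minor has a bramble of order $\Omega(k)$ and therefore $\dtw{D}=\Omega(k)$; contraposing rules out this grid once $\dtw{D}$ is small. Routing the argument through brambles, rather than through a (false) monotonicity of $\dtw{\cdot}$ under butterfly minors, is what makes this direction go through.

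The substantial direction is: if $\dtw{D}\ge f(k)$ then $D$ contains $\CylGrid{k}$ as a butterfly minor, and I would proceed in stages. \emph{Stage 1:} invoke directed bramble/haven–duality to convert large directed \treewidth into a large \emph{well-linked set} $A\subseteq\V{D}$, i.e.\ one in which any two equal-size subsets (up to roughly $|A|/2$) are joined by that many pairwise vertex-disjoint directed paths. \emph{Stage 2:} extract from $A$ a large \emph{linkage}, a family $P_1,\dots,P_m$ of pairwise vertex-disjoint directed paths that is itself well-linked and arranged "in series" — the threads of the prospective grid. \emph{Stage 3:} run a win/win argument on such a linkage: either one can route sufficiently many directed paths that cross the threads in alternating directions and clean them up to realise $\CylGrid{k}$ as a butterfly minor, or one locates a small separator, or a "clean" separation of the linkage, along which one recurses on a smaller digraph while retaining a large linkage of improved structure. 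A carefully chosen progress measure (number of threads, degree of tangling, well-linkedness) forces the recursion to terminate, and the target bound $f(k)$ is assembled from it.

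The main obstacle — the reason the statement was open for more than a decade — is that directed graphs possess no robust notion of planarity or of "crossing", so the undirected dichotomy "flat wall vs.\ large clique minor" has no immediate analogue. Overcoming this means developing directed counterparts of tangles and of a Flat Wall Theorem: one defines \emph{directed tangles} and \emph{cylindrical renditions} of pieces of $D$ in a disc/cylinder, proves lemmas for splitting, merging and bounding the number of "apex-like" vertices of such renditions, and shows that a linkage which cannot be wrapped into a cylindrical grid must admit the clean separation the recursion needs. That final step — extracting an exploitable separation precisely from the failure to build the grid — is both the technically hardest and the conceptually most novel ingredient, and it is where I expect the bulk of the work to lie.
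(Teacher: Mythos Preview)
The paper does not prove \cref{thm:DGT}; it is quoted verbatim from \cite{kawarabayashi2015directed} and used as a black box (to derive \cref{cor:gridminor_highCycW}, \cref{cor:cywgridthm}, and ultimately the bipartite Matching Grid Theorem). There is therefore no ``paper's own proof'' to compare your proposal against.

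Your sketch is a reasonable high-level summary of the Kawarabayashi--Kreutzer argument, and nothing in it is wrong as a description of that paper's strategy. But for the purposes of the present paper you are doing far more work than required: the intended ``proof'' here is simply the citation. If you want to match what the paper does, replace your sketch by a one-line reference to \cite{kawarabayashi2015directed}.
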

	
	The goal of this section is the introduction of a branch decomposition for digraphs.
	Branch decompositions usually work as follows.
	They are defined as a tuple $\br{T,\delta}$ such that $T$ is a cubic tree and $\delta$ is a bijection between the leaves of $T$ and the vertices of the graph $G$ that is decomposed by $\br{T,\delta}$.
	Therefore every edge of $T$ induces a bipartition of the vertex set of $G$, which can be seen as the two shores of an edge cut.
	The width of the decomposition depends on the function that evaluates the edge cut.
	
	In the case of directed \treewidth, this concept faces a challenge.
	While edge cuts are very local objects, the guards of a directed tree decomposition are not.
	In fact, one of the main issues of directed \treewidth is that the guards can appear almost everywhere in the graph.
	
	In order to approach this problem, we need our evaluation function for the edge cuts given by our decomposition to measure a more global property.
	We define \emph{cuts}, \emph{shores} and \emph{trivial cuts}
        for \digraphs as for undirected graphs. Recall that $L(T)$ is
        the set of leaves of a tree $T$.
	
	\begin{definition}[\Cyclewidth]
		Let $D$ be a \digraph.
		A cycle decomposition of $D$ is a tuple~$\br{T,\varphi}$,
		where $T$ is a cubic tree (i.e.\@ all inner vertices have
		degree three) and $\varphi: \Leaves{T} \to \V{D}$ a bijection.
		For a subtree $T'$ of $T$ we use $\varphi(T') \coloneqq \CondSet{\fkt{\varphi}{t}}{t\in\V{T'}\cap\Leaves{T}}$.
		Let $t_1t_2$ be an edge in $T$ and let $\br{T_1,T_2} \coloneqq
		\splitTree{T}{t_1t_2}$. Let $\cut{t_1t_2} \coloneqq \cut{\phi(T_1)}$.
		The \emph{\cycPoros} of the edge~$t_1t_2$ is
		\begin{align*}
			\cycPor{\cut{t_1t_2}} \coloneqq \max_{\substack{\mathcal{C} \text{ family of pairwise}\\\text{disjoint directed cycles}\\\text{in $D$}}} \Bigl|\cut{t_1t_2} \cap \bigcup_{C\in\mathcal{C}}\E{C}\Bigr|.
		\end{align*}
		The width of a cycle decomposition $\br{T,\varphi}$ is given by $\max_{t_1t_2 \in \E{T}} \cycPor{\cut{t_1t_2}}$ and the \cyclewidth of $D$ is then defined as 
		\begin{align*}
			\cycWidth{D} \coloneqq \min_{\substack{\br{T,\varphi} \text{ cycle decomposition}\\\text{of } D}} \quad \max_{t_1t_2 \in \E{T}} \cycPor{\cut{t_1t_2}}.
		\end{align*}	
	\end{definition}

Note that the cycle porosity is the number of cycle edges crossing a
cut. Note also that for different edges of the decomposition tree,
different cycle families may constitute the cycle porosity.  

Moreover, let $D$ be a digraph and $\Brace{T,\delta}$ a cycle decomposition for $D$ of width $k$.
Let $D'$ be the digraph obtained from $D$ by reversing the orientation of all edges, then $\Brace{T,\delta}$ is a cycle decomposition for $D'$ of width $k$.
        
	Our next goal is to prove that \cyclewidth is bounded from above by a function in the directed \treewidth.
	For this, we transform a directed tree decomposition into a cycle
	decomposition in two steps. First, we push all vertices contained in
	bags of inner vertices of the arborescence into leaf bags, and then
	transform the result into a cubic tree.
	
	\begin{definition}[Leaf Directed Tree Decomposition]
		A directed tree decomposition $\br{T,\beta,\gamma}$ of a digraph~$D$ is called a \emph{leaf directed tree decomposition} if  $\fkt{\beta}{t} = \emptyset$ for all $t \in \V{T} \setminus \Leaves{T}$.
	\end{definition}
	
	So first, we show that a directed tree decomposition can be turned
	into a leaf decomposition without changing its width.
	
	
	
	\begin{lemma}\label{lem:leaf_decomp}
		Let $(T,\beta,\gamma)$ be a directed tree decomposition of a digraph
		$D$. There is a linear time algorithm that computes a leaf directed 
		tree decomposition of $D$ of the same width.
	\end{lemma}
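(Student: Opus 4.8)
The plan is to keep the shape of the arborescence~$T$ almost intact and to evacuate the content of every non-empty inner bag into a fresh leaf hanging directly below it. Process the inner vertices of $T$ in an arbitrary order; whenever $t\in\V{T}\setminus\Leaves{T}$ satisfies $\fkt{\beta}{t}\neq\emptyset$, add a new vertex $\ell_t$ and a new edge $e_t\coloneqq\br{t,\ell_t}$ oriented away from the root, set $\fkt{\beta}{\ell_t}\coloneqq\fkt{\beta}{t}$ and then $\fkt{\beta}{t}\coloneqq\emptyset$, and equip $e_t$ with a guard $\fkt{\gamma}{e_t}$ built from the guards of the edges already incident with~$t$ (the exact choice is discussed below). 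Since $\ell_t$ is a sink, it becomes a leaf of the resulting arborescence $T'$, so once all inner vertices have been processed the vertices carrying a non-empty bag are exactly leaves; that is, $\br{T',\beta,\gamma}$ is a leaf directed tree decomposition. Each step is a constant-size local modification of the tree together with the construction of one guard from already present guards, so the procedure runs in time linear in the size of the input decomposition.

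It remains to verify that $\br{T',\beta,\gamma}$ is again a directed tree decomposition of the same width, and the easy parts are as follows. Relocating the bag of an inner vertex $s$ to the leaf $\ell_s$ attached below it does not change the union of all bags, so the near-partition condition survives; and for every edge $e$ of the original tree it does not change the set $\bigcup\condset{\fkt{\beta}{s'}}{s'>e}$ either --- since $\ell_s$ is a child of $s$, we have $\ell_s>e$ exactly when $s>e$ --- so the normality condition is inherited verbatim for all original edges. Regarding the width, provided $\fkt{\gamma}{e_t}$ is chosen inside the original $\fkt{\Gamma}{t}$ and disjoint from $\fkt{\beta}{t}$, we get $\fkt{\Gamma}{\ell_t}=\fkt{\beta}{\ell_t}\cup\fkt{\gamma}{e_t}\subseteq\fkt{\Gamma}{t}$ (as computed before the modification), while the new $\fkt{\Gamma}{t}$ loses $\fkt{\beta}{t}$ from its bag and gains only $\fkt{\gamma}{e_t}$, hence also stays inside the original $\fkt{\Gamma}{t}$; so no frame $\fkt{\Gamma}{\cdot}$ grows and the width is preserved.

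The whole lemma thus comes down to one remaining point, which I expect to be the main obstacle: choosing $\fkt{\gamma}{e_t}$ so that the normality condition for $e_t$ holds --- equivalently, so that the old $\fkt{\beta}{t}$ is $\fkt{\gamma}{e_t}$-normal --- subject to $\fkt{\gamma}{e_t}\subseteq\fkt{\Gamma}{t}\setminus\fkt{\beta}{t}$. The natural candidate is the whole set $\fkt{\Gamma}{t}\setminus\fkt{\beta}{t}$, that is, the union of the guards of the edges incident with~$t$ with $\fkt{\beta}{t}$ removed, and one must show that $\fkt{\beta}{t}$ is normal with respect to it. The strategy is to take a shortest directed walk witnessing a violation --- both ends in $\fkt{\beta}{t}$, avoiding all those guards, every interior vertex (by minimality of length) outside $\fkt{\Gamma}{t}$ --- and to trap it using the normality conditions of the edges around~$t$: when $t$ is not the root, the normality condition for the edge entering $t$ (whose guard the walk avoids and below which $\fkt{\beta}{t}$ sits) confines the walk to the subtree rooted at~$t$, and the guards of the outgoing edges of~$t$ should confine it still further. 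Making this trapping genuinely work --- in particular ruling out that the walk slips into a child subtree and returns without ever meeting a guard, and dealing with the root, where there is no entering edge --- is the technical heart of the argument, and it may be necessary to strengthen the choice of $\fkt{\gamma}{e_t}$ or to first normalise the input decomposition so that every bag is disjoint from the guards of its incident edges.
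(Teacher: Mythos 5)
Your overall construction is the same as the paper's — attach a fresh leaf below each non-empty inner vertex and move the bag there — but you impose a constraint on the new guard, namely $\gamma(e_t)\subseteq\Gamma(t)\setminus\beta(t)$, that the paper does not impose, and this constraint is precisely what creates the ``main obstacle'' you identify and leave unresolved. That is a genuine gap: with your restricted choice, the normality of $\beta(t)$ with respect to $\gamma(e_t)$ requires a real argument, and the trapping you sketch is not routine — you would have to deal with walks slipping through $\gamma(e')\cap\beta(t)$ for edges $e'$ incident with $t$ (which your proposed normalisation cannot in general rule out, since guards are allowed to meet bags in a directed tree decomposition), and with the root case.

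The paper avoids the problem by dropping the disjointness requirement and simply setting $\gamma(e_t)\coloneqq\beta(t)$. Then normality of the new edge is vacuous: the set of vertices below $e_t$ is exactly $\beta(t)$, which is the guard itself, so no directed walk in $D-\gamma(e_t)$ can have its endpoints in $\beta(t)$. The width is still unchanged — this is where your ``disjoint from $\beta(t)$'' condition was superfluous: we get $\Gamma'(\ell_t)=\beta(t)\cup\gamma(e_t)=\beta(t)\subseteq\Gamma(t)$, and $\Gamma'(t)=\emptyset\cup\bigl(\bigcup_{e\sim t}\gamma(e)\bigr)\cup\beta(t)=\Gamma(t)$, so no frame grows even though the new guard overlaps the old bag. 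In short: you built in a requirement that is neither needed for the width bound nor compatible with a trivial normality check, and removing it is exactly what makes the lemma a two-line argument.
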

	\begin{proof}
		For every inner vertex $t \in \V{T}$ such that
		$\fkt{\beta}{t}\neq \emptyset$ we add a new leave $t'$
		adjacent to $t$ (and no other vertices of $T$) and thus obtain a
		new tree $T'$. The new bags are defined by $\beta'\coloneqq
		V(T') \to 2^{V(D)}$ with $\beta'(t) \coloneqq \beta(t)$ for
		$t\in L(T)$, and $\beta'(t) \coloneqq \emptyset$ and $\beta'(t')
		\coloneqq \beta(t)$ for $t\in V(T)\setminus L(T)$. The new
		guards are defined by
		\begin{align*}
			\fkt{\gamma'}{e} \coloneqq
			\begin{cases}
				\fkt{\gamma}{e} & \text{ if } e\in \E{T}\\
				\beta(t') & \text{ if } e = \br{t,t'} \text{
					for some } t\in V(T)\setminus L(T).
			\end{cases}
		\end{align*}
		We prove that $\br{T',\beta',\gamma'}$ is still a directed \treeDecomp.
		The bags given by $\beta'$ still provide a near partition of $\V{D}$.
		For all edges $e \in \E{T}$ it is still the case that $\bigcup\condset{\fkt{\beta}{s}}{s\in\V{T'},s > e}$ is $\fkt{\gamma}{e}$-normal and for the new edges the normality is obvious.
		Finally, if $\Gamma'$ is defined for $(T',\beta',\gamma')$ as $\Gamma$ for $(T,\beta,\gamma)$, then $\Gamma'(t) = \Gamma(t)$ for all $t\in V(T)$ and for all $t'$ we have $\Gamma'(t') \subseteq \Gamma(t)$. Hence, the width of the decomposition did not change. Clearly, the new decomposition can be computed in linear time.
	\end{proof}
	
	So whenever we are given a directed tree decomposition of a digraph $D$, we can manipulate it such that exactly the leaf-bags are non-empty.
	This is still not enough since a cycle decomposition requires every leaf to be mapped to exactly one vertex -- so every bag has to be of size at most one -- and also the decomposition tree itself has to be cubic.
	The following lemma shows how a leaf directed tree
        decomposition can be further manipulated to meet the above
        requirements, again in polynomial time and without changing
        the width. We call a directed tree decomposition subcubic if
        its arborescence is subcubic.
	
\begin{lemma}
  \label{lem:cubic-leaf-decomp}
  Let $D$ be a digraph.  If there exists a directed \treeDecomp of
  width~$k$ for $D$, there also exists a subcubic leaf directed
  \treeDecomp of width $k$ for $D$ where every bag has size at most one.
\end{lemma}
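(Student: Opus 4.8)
The plan is to start from the leaf directed tree decomposition provided by \cref{lem:leaf_decomp} and then perform two local surgeries on the arborescence: first, split every oversized leaf bag into a path of singleton leaf bags, and second, replace every vertex of degree exceeding three by a small binary gadget. Throughout, the key point is that both operations only introduce new edges whose guards are subsets of already-present guard-or-bag sets, so the quantity $\Abs{\fkt{\Gamma}{t}}$ never increases at any node, and the $Z$-normality condition is preserved because the bags of the original decomposition are merely redistributed, never enlarged.

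First I would apply \cref{lem:leaf_decomp} to assume $\fkt{\beta}{t}=\emptyset$ for all inner vertices $t$. Now consider a leaf $t$ with $\fkt{\beta}{t}=\Set{v_1,\dots,v_m}$, $m\ge 2$, and let $e=\br{s,t}$ be the edge into $t$. I would replace $t$ by a directed path $t_1,\dots,t_m$ attached below $s$ (so $s\to t_1\to\dots\to t_m$, with each $t_i$ also getting a pendant leaf $\ell_i$ if one prefers all mass on genuine leaves, or more simply just make $t_1,\dots,t_m$ themselves the leaves of a caterpillar). Set $\beta(t_i)\coloneqq\Set{v_i}$. For the guards of the new edges $\br{t_{i},t_{i+1}}$, observe that the set ``below'' this edge is exactly $\Set{v_{i+1},\dots,v_m}\subseteq\fkt{\beta}{t}$, which in the original decomposition was $\fkt{\gamma}{e}$-normal as part of the larger normal set; restricting a $Z$-normal set to a subset keeps it $Z$-normal, so I can take $\gamma\br{t_i,t_{i+1}}\coloneqq\fkt{\gamma}{e}$ (or a suitable subset). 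Then $\fkt{\Gamma}{t_i}=\Set{v_i}\cup\fkt{\gamma}{e}\cup\fkt{\gamma}{e}\subseteq\fkt{\Gamma}{t}$, so the width does not grow; and the original edge $e$ now has guard $\fkt{\gamma}{e}$ with the same normal set below it (it is $\Set{v_1,\dots,v_m}$ plus whatever hung below $t$, unchanged). After doing this for every leaf, all bags have size at most one.

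Next I would subcubify. Any node $t$ of degree $d>3$ in the current arborescence has one in-edge and $d-1\ge 3$ out-edges $e_1,\dots,e_{d-1}$ to children $c_1,\dots,c_{d-1}$. I replace $t$ by a path $t=t_1\to t_2\to\dots\to t_{d-2}$ of empty-bag nodes, routing $c_1,c_2$ off $t_1$ and $c_{j+1}$ off $t_j$ for $j\ge 2$, so each $t_j$ has degree at most three. The guard of each new internal edge $\br{t_j,t_{j+1}}$ is set to $\fkt{\gamma}{e}$ where $e$ is the original in-edge of $t$; the set below $\br{t_j,t_{j+1}}$ is $\bigcup_{i\ge j+1}\beta(T_{c_i})$, a subset of the set below $e$, hence $\fkt{\gamma}{e}$-normal. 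The guards $\gamma(e_i)$ are unchanged and the sets below them are unchanged, so their normality persists. At each $t_j$ we have $\fkt{\Gamma}{t_j}\subseteq\fkt{\gamma}{e}\cup\gamma(e_{i})$-type unions all contained in the old $\fkt{\Gamma}{t}$, so the width is preserved. Iterating over all high-degree nodes (the total excess degree strictly decreases, or one just does a single pass bottom-up) yields a subcubic arborescence. Finally, degree-two inner nodes with empty bags can be suppressed if a strictly cubic tree is wanted, which again only shrinks $\Gamma$-sets.

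The step I expect to be the main obstacle — or at least the one needing the most care — is the bookkeeping for $Z$-normality and for the definition of $\fkt{\Gamma}{t}$ at the newly created internal nodes: one must verify that the union of guards incident to $t_j$ is genuinely contained in some old $\fkt{\Gamma}{t}$, which relies on choosing the new guards conservatively (reusing $\fkt{\gamma}{e}$) and on the elementary fact that a $Z$-normal set stays $Z$-normal under passing to subsets and that normality of the ``downward union'' at an edge depends only on that union and the guard, both of which are under control here. Everything is manifestly computable in polynomial (indeed linear) time, matching the phrasing of the companion lemma.
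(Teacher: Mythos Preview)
Your argument has a genuine gap: the claim that ``restricting a $Z$-normal set to a subset keeps it $Z$-normal'' is false, and both halves of your construction rest on it. Take $D$ to be the directed triangle on $\Set{v_1,v_2,v_3}$ with $Z=\emptyset$: the full vertex set is trivially $\emptyset$-normal (there is nothing outside it to visit), but $\Set{v_2,v_3}$ is not, since the walk $v_2\to v_3\to v_1\to v_2$ leaves the set without meeting $Z$. In general, $Z$-normality says a walk starting and ending in $S$ cannot leave $S\cup Z$; shrinking $S$ enlarges the complement and makes escape \emph{easier}, not harder. Concretely, in your leaf-splitting step the set below the new edge $\br{t_i,t_{i+1}}$ is $\Set{v_{i+1},\dots,v_m}$, and nothing prevents a walk from $v_{i+1}$ through $v_1$ back to $v_{i+1}$ that avoids $\fkt{\gamma}{e}$. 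The subcubification step has the identical defect: a walk from $\fkt{\beta}{T_{c_{j+1}}}$ through $\fkt{\beta}{T_{c_1}}$ and back is permitted by $\fkt{\gamma}{e}$-normality of the full union $\bigcup_i\fkt{\beta}{T_{c_i}}$ but violates $\fkt{\gamma}{e}$-normality of the tail $\bigcup_{i\ge j+1}\fkt{\beta}{T_{c_i}}$.

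The paper repairs the two steps by two different devices. For the leaf splitting it uses a star rather than a path: each $v\in\fkt{\beta}{\ell}$ becomes a new leaf $\ell_v$ attached directly below $\ell$, and the guard on the new edge is taken to be $\fkt{\beta}{\ell}\cup\fkt{\gamma}{x,\ell}$. Because this guard contains the \emph{entire} old bag, any walk from $v$ through another $v'\in\fkt{\beta}{\ell}$ already meets the guard, and one still has $\fkt{\Gamma}{\ell_v}\subseteq\fkt{\Gamma}{\ell}$, so the width is preserved. For the subcubification, where one cannot afford to put all sibling subtrees into a single guard, the paper instead arranges the children $c_1,\dots,c_d$ in a \emph{topological} order (so that for $i<j$ every path from $\fkt{\beta}{T_{c_j}}$ to $\fkt{\beta}{T_{c_i}}$ meets $\fkt{\Gamma}{t}$) before chaining them along the path. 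With this ordering a walk starting and ending in the tail $\bigcup_{i'\ge i}\fkt{\beta}{T_{c_{i'}}}$ cannot dip into an earlier $\fkt{\beta}{T_{c_j}}$ without being caught, which is precisely the missing normality. Your width bookkeeping is fine; it is this ordering idea (or, alternatively, the enlarged guards) that your outline lacks.
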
	
\begin{proof}
  Let $D$ be a digraph and $\br{T'',\beta,\gamma}$ a directed
  \treeDecomp of width $k$.  Then, due to \cref{lem:leaf_decomp}, there
  exists a leaf directed \treeDecomp~$\br{T,\beta,\gamma}$ of $D$ of
  the same width.  \Cref{alg:cubify} takes this as input and
  transforms it into a subcubic leaf directed
  \treeDecomp~$\br{T',\beta',\gamma'}$.
		
  \begin{algorithm}[h!]
    \caption{cubify a leaf decomposition}\label{alg:cubify}
    \begin{algorithmic}[1]
      \Procedure{cubify}{$\br{T,\beta,\gamma}$}
      \State $T' \gets T$, $\beta' \gets \beta$, $\gamma' \gets \gamma$
      \ForAll{$l \in \Leaves{T}$}
        \State $x \gets$ parent of $l$
        \ForAll{$v \in \fkt{\beta}{l}$}
          \State introduce new vertex $l_v$
          \State $T' \gets T' + \br{l,l_v}$
          \State $\fkt{\beta'}{l_v} \gets \Set{v}$
          \State $\fkt{\gamma'}{l,l_v} \gets \fkt{\beta}{l} \cup
          \fkt{\gamma}{x,l}$
        \EndFor
        \State $\fkt{\beta'}{l} \gets \emptyset$
      \EndFor
      \While{$T'$ not subcubic}
      \State let $t \in \V{T}$ with $\deg(t) = d+1 > 3$ ($\deg(t) =
      \Abs{\{e\in E(t) \mid e\sim t\}}$)
      \State $x \gets$ parent of $t$
      \State let $c_1,\dots,c_d$ be the
      children of $t$ in topological order of their bags\footnote{Note
      that all bags have size at most $1$.}
      \State
      introduce new vertices $t_1,\dots,t_{d-1}$ with empty bags
      \State
      $T' \gets T' - t + \Set{t_1,\dots,t_{d-1}} + \Set{\br{t_i,c_i}
        \mid 1 \leq i \leq d-1} + \Set{\br{t_i,t_{i+1}} \mid 1 \leq i
        \leq d-2} + \br{t_{d-1},c_d} + \br{x,t_1}$
      \State
      $\fkt{\gamma'}{t_i,c_i} \gets \fkt{\gamma}{t,c_i}$ for all
      $1 \leq i \leq d-1$
      \State $\fkt{\gamma'}{t_{d-1},c_d} \gets \fkt{\gamma}{t,c_d}$
      \State $\fkt{\gamma'}{t_{i},t_{i+1}}, \fkt{\gamma'}{x,t_{1}} \gets
      \fkt{\gamma}{x,t}$
      \EndWhile
      \State \textbf{return}
      $\br{T',\beta',\gamma'}$
      \EndProcedure
    \end{algorithmic}
  \end{algorithm}
		
  Clearly the resulting tree $T'$ is subcubic and only the leave bags
  contain vertices.  Now we want to check whether the output of the
  algorithm again yields a proper directed \treeDecomp of desired width.
		
  In the first part we split the bag of each leaf up into bags of
  single vertices which are added as new children.  We show that such
  a split of a leaf $l$ does not destroy the properties of the
  directed \treeDecomp.  The new vertices $l_v$ obtain bags of size
  $1$.  The new edge $\br{l,l_v}$ obtains the guard
  $\fkt{\beta}{l} \cup \fkt{\gamma}{x,l_v}$.
		
  Due to
  $\Abs{\fkt{\beta}{l} \cup \fkt{\gamma}{x,l_v} \cup \Set{v}} =
  \Abs{\fkt{\beta}{l} \cup \fkt{\gamma}{x,l}} \leq k+1$, the width of
  the new decomposition is still at most $k$.
		
  The guard of the edge going to $l_v$ contains $v$, therefore every
  walk in $D$ starting and ending at $v$ intersects the guard.  So
  after the first part of the algorithm $\br{T',\beta',\gamma'}$ is
  still a proper directed \treeDecomp.
		
  In the second part we split high degree vertices into paths.  For
  every vertex~$t$ of (total) degree $d+1>3$ we introduce $d-1$ new
  vertices $t_1,\dots,t_{d-1}$.  Let $c_1,\dots,c_d$ be the children
  of $t$.  We can assume without loss of generality that the children
  are ordered by the topological order of their bags.  That is if
  $i < j$, then every path from $\fkt{\beta}{T_{c_j}}$ to $\fkt{\beta}{T_{c_i}}$
  intersects $\fkt{\Gamma}{v}$.  The subtrees rooted at the children
  stay intact and are attached differently to the subtree above
  $T-T_t$.  To do this we first remove $t$ from $T$ obtaining subtrees
  $T_r$ containing the root and the parent~$x$ of $t$ as a leaf, and
  $T_{c_i}$ for every child of $t$.  We now add the new vertices as
  follows.  The former parent $x$ builds a path with the new vertices
  $t_i$ in increasing order.  Then every $t_i$ is mapped to the
  corresponding $c_i$, leaving $c_d$ which is also mapped to $t_{d-1}$
  which only has two neighbours so far, since its the last on the
  path.
		
  For all the subtrees that stay the same during the construction it
  is clear that no walk can leave them and come back without
  intersecting a guard.  But we introduce new subtrees that contain
  several child-subtrees of $t$.  Let $T'_{t_i}$ be such a subtree.
  Assume there is a walk $W$ in $D$ starting and ending in
  $\fkt{\beta}{T_{t_i}}$ containing a vertex from
  $\fkt{\beta}{T-T_{t_i}}$ but no vertex of $\fkt{\gamma}{t}$, which
  is the guard for the edge towards $t_i$.  There are two
  possibilities.  Either $W$ contains a vertex of $T'-T_{t_1}=T-T_t$,
  which directly yields a contradiction to $\br{T,\beta,\gamma}$ being
  a proper directed \treeDecomp.  Or $W$ contains a vertex from
  $\fkt{\beta}{T_{t_j}}$ for some $j < i$.  But this would imply that
  there is a path from $\fkt{\beta}{T_{c_i}}$ to
  $\fkt{\beta}{T_{c_j}}$, which contradicts the topological ordering.
		
  Thus, the output of the algorithm is again a proper directed
  \treeDecomp.
\end{proof}
	
	So given a directed \treeDecomp of a digraph $D$, we can transform it into a subcubic leaf directed tree decomposition $\br{T,\beta,\gamma}$ of $D$ in linear time without changing its width.
	It remains to show that if we forget about the orientation of the edges of the arborescence $T$, $\br{T,\beta}$ defines a cycle decomposition of bounded width.
	
\begin{proposition}
\label{lem:cyw-leq-dtw}
For every \digraph $D$ we have $\cycWidth{D} \leq 2\dtw{D}$.
\end{proposition}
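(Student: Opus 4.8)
The plan is to convert a directed tree decomposition of $D$ of width $k\coloneqq\dtw{D}$ into a cycle decomposition of width at most $2k$; since $\cycWidth{D}$ is the minimum width over all cycle decompositions, this yields the bound. (If $k=0$ then $D$ is acyclic and $\cycWidth{D}=0$, so assume $k\ge 1$.) First I would invoke \cref{lem:cubic-leaf-decomp} to obtain a subcubic leaf directed tree decomposition $\br{T,\beta,\gamma}$ of $D$ of width $k$ in which every bag has size at most one; then the non-empty bags are singletons sitting at leaves and are in bijection with $\V{D}$. To make the decomposition tree cubic I would repeatedly delete leaves with empty bags and then suppress every degree-two vertex, obtaining a cubic tree $T'$; mapping each remaining leaf to the vertex in its bag gives a bijection $\varphi\colon\Leaves{T'}\to\V{D}$, so $\br{T',\varphi}$ is a cycle decomposition of $D$, and it remains to bound its width.

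Fix an edge $f=t_1t_2$ of $T'$. Under the cleanup it corresponds to a path in $T$ all of whose internal vertices have empty bags; choose an oriented edge $e$ of $T$ on this path, with head $b$, and set $S\coloneqq\bigcup_{t\ge b}\fkt{\beta}{t}$. Since $\beta$ is constant along that path, the bipartition of $\V{D}$ induced by $f$ is $(\V{D}\setminus S,S)$, hence $\cut{f}=\cut{S}$; and the normality axiom of a directed tree decomposition says exactly that $S$ is $\fkt{\gamma}{e}$-normal. The heart of the proof is the claim that \emph{whenever $S\subseteq\V{D}$ is $Z$-normal, every family $\mathcal C$ of pairwise disjoint directed cycles satisfies $\bigl|\cut{S}\cap\bigcup_{C\in\mathcal C}\E{C}\bigr|\le 2\Abs{Z}$}. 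Granting it, the edges $f$ of $T'$ incident to a leaf give trivial cuts, of cycle porosity at most $2\le 2k$, while for all other edges $\cycPor{\cut{f}}\le 2\Abs{\fkt{\gamma}{e}}\le 2k$, so $\br{T',\varphi}$ has width at most $2k$.

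Proving the claim is where the real work lies. First, a cycle $C\in\mathcal C$ that is disjoint from $Z$ cannot cross $\cut{S}$: if it met both $S$ and $\bar S\coloneqq\V{D}\setminus S$, then going once around $C$ would be a directed walk in $D-Z$ from $S$ to $S$ through $\bar S\setminus Z=\V{D}\setminus(Z\cup S)$, contradicting $Z$-normality. So every crossing cycle meets $Z$; as the cycles in $\mathcal C$ are pairwise disjoint, it suffices to prove that a cycle $C$ meeting $Z$ in $m\ge 1$ vertices contributes at most $2m$ edges to $\cut{S}$. The $m$ vertices of $C\cap Z$ split $C$ into $m$ arcs, each a directed path whose interior avoids $Z$ and whose two endpoints lie in $Z\subseteq\bar S$; I would show each arc meets $S$ in at most one subpath, so it crosses $\cut{S}$ at most twice. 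For that, let $R$ (resp.\ $R^{*}$) be the set of vertices in $\bar S\setminus Z$ reachable from $S$ in $D-Z$ (resp.\ from which $S$ is reachable in $D-Z$); $Z$-normality forces $R\cap R^{*}=\emptyset$, and $R$ is closed under moving forward inside $\bar S\setminus Z$ along edges of $D-Z$. Now walk along an arc: a step from $S$ into $\bar S\setminus Z$ must land in $R$, and a step from $\bar S\setminus Z$ into $S$ must start in $R^{*}$; hence after the arc has once stepped out of $S$ into $\bar S\setminus Z$ it stays in $R$ and can never re-enter $S$ before reaching the next vertex of $Z$. Thus the arc has at most one $S$-subpath, and summing over the $m$ arcs and then over the disjoint cycles gives $2\Abs{Z}$.

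I expect two points to be the main obstacles. The first is making the reachability step airtight, i.e.\ arguing with directed walks rather than paths that once a crossing cycle has left $S$ it cannot return without meeting the guard. The second is the bookkeeping in the transformation: turning the subcubic leaf decomposition into an honest cubic tree with a leaf-bijection, matching each edge of $T'$ to a guard of the original decomposition, and isolating the leaf-incident (trivial) cuts so that the guards controlling the remaining cuts have size at most $k$.
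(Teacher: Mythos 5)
Your proposal is correct, and the overall route — invoke \cref{lem:cubic-leaf-decomp}, clean up to a cubic tree with a leaf bijection, and then bound the cycle porosity of each induced cut by $2\lvert\gamma(e)\rvert$ using $\gamma(e)$-normality — is the same as the paper's. The difference lies in the counting argument at the heart of the bound, and it is a genuine one. The paper partitions each crossing cycle into the maximal subwalks whose interiors lie entirely in the shore $X_2$ opposite to $S$; each such walk begins and ends in $S$, so by normality it must contain a guard vertex, and since the guards lie outside $S$ and distinct walks have disjoint interiors, the number of such walks is bounded by $\lvert\gamma(e)\rvert$, each contributing two cut edges. You instead partition each crossing cycle into the arcs between consecutive $Z$-vertices, observe via the disjoint reachability sets $R$ and $R^{*}$ that $Z$-normality forbids an arc from re-entering $S$ after it has left it, and conclude each arc contributes at most two cut edges; summing over the at most $\lvert Z\rvert$ arcs gives the same $2\lvert Z\rvert$ bound. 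These are in some sense dual decompositions of the same cycle (by cut-crossings versus by guard-hits), and both are valid. Your formulation has the virtue of isolating a clean standalone claim purely about $Z$-normal vertex sets, and it sidesteps some arithmetic bookkeeping in the paper's walk-counting (the paper equates the cycle porosity with the number of walks rather than with twice that number, which, combined with the compensating looseness in ``each vertex can guard at most two paths,'' still yields the stated bound — but your accounting is tighter and more transparent). One small remark: the $R$/$R^{*}$ machinery is more than is strictly needed; the observation that a subwalk of the arc going $S\to(\bar S\setminus Z)\to S$ in $D-Z$ would itself directly violate $Z$-normality already gives the contiguity of the $S$-part of each arc. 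Also, both your write-up and the paper pass from $\lvert\gamma(e)\rvert$ to $k$ without comment, even though the width condition only gives $\lvert\Gamma(t)\rvert\le k+1$; this is a harmless off-by-one that neither treatment addresses.
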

\begin{proof}
  Let $k \coloneqq \dtw{D}$.  Due to \cref{lem:cubic-leaf-decomp}
  there exists a subcubic leaf directed \treeDecomp
  $\br{T,\beta,\gamma}$ of $D$ of width $k$ such that every
  leaf bag contains at most one vertex.  We want to show that
  $\br{T,\beta}$ yields a cycle decomposition of width at most $2k$.
  The function $\beta$ already provides a bijection between
  $\Leaves{T}$ and $\V{D}$.  So next we show that every edge
  $e \in \E{T}$ satisfies $\cycPor{e} \leq 2\Abs{\fkt{\gamma}{e}}$.
  Afterwards we make the subcubic decomposition cubic.
		
  Let $\mathcal{C}$ be a minimal family of pairwise disjoint directed
  cycles in $D$ and let $e\in E(T)$.  We show that
  $\Abs{\cut{e}\cap E(\mathcal{C})} \le 2\Abs{\gamma(e)}$.  Let
  $X_1,X_2\subseteq\V{D}$ be the two shores of the cut $\cut{e}$ such
  that $X_1=\fkt{\beta}{T'}$ where $T'\subseteq T$ is the subtree of
  $T$ not containing the root.  Furthermore, let
  $Y_1\subseteq\V{\mathcal{C}}\cap X_1$ be the vertices of the cycles
  in $\mathcal{C}$ incident with an edge of
  $\cut{e}\cap\E{\mathcal{C}}$.

  Let $\mathcal{W}$ be the collection of directed walks $W_{v,w}$ from
  a vertex $v\in Y_1$ to some $w\in Y_1$ such that
\begin{enumerate}
\item $W_{v,w}$ is a subwalk of some cycle in $\mathcal{C}$, and  
\item $\emptyset\neq\V{W_{v_1}}\setminus\Set{v_1,v_2}\subseteq X_2$.
\end{enumerate}
In other words, $\mathcal{W}$ is the set of walks (paths or cycles)
starting in some $v\in Y_1$ and going along the cycle in $\mathcal{C}$
that contains $v$ and ending in the first vertex in $X_1$ after
leaving it from $v_1$.  Clearly, the walks in $\mathcal{W}$ are not
necessarily vertex disjoint as the paths may share common endpoints in
$Y_1$.
		
Let $W_1,W_2\in\mathcal{W}$ be two walks with
$\V{W_1}\cap\V{W_2}\neq\emptyset$, then there is a cycle
$C\in\mathcal{C}$ such that both $W_1$ and $W_2$ are subwalks of $C$.
Hence $\Abs{\V{W_1}\cap\V{W_2}}\leq 2$.
As $\fkt{\gamma}{e}$ is a guard in the
directed \treeDecomp, it must contain a vertex of every walk in
$\mathcal{W}$. Every vertex can guard at most two paths, hence
$\cycPor{\cut{e}} = \Abs{\mathcal{W}} \leq 2\Abs{\fkt{\gamma}{e}}$.

		
		

Now note that there still might be vertices of degree two in $T$.
Since they are not leaves, $\beta$ does not map them to any vertex of
$\V{G}$.  Therefore, the two edges incident to a vertex of degree two
induce the same cut and we can contract one of them to reduce the
number of vertices of degree two.  Let $\br{T',\beta}$ be the
decomposition obtained in this way, then $\br{T',\beta}$ is cubic and
all cuts induced by edges still have porosity at most $2k$.  Thus,
$\br{T',\beta}$ is a cycle decomposition of $D$ of width at most $2k$.
\end{proof}
	
Hence, given a directed \treeDecomp of width $k$, we can compute a
cycle decomposition of width at most $2k$ in polynomial time using the
algorithms from the proofs of
\cref{lem:leaf_decomp,lem:cubic-leaf-decomp}.  The proofs of other
known obstructions to directed \treewidth such as \emph{well-linked
  sets} or \emph{havens} (see
\cite{reed1999directed,johnson2001directed}) yield constant factor
approximation fixed-parameter tractable algorithms for directed
\treeDecomps as described in \cite{dehmer2014quantitative}. (Given a
computational problem and a parameter (in our case, directed
treewidth), an algorithm solving it is fixed-parameter tractable if
its running time is bounded by a function of the form
$f(k)\cdot n^{O(1)}$ where $f$ is a computable function.) If we combine
this with our results from above we obtain the following concluding
corollary.
	
\begin{corollary}\label{cor:constantFactorCW}
  There is a fixed-parameter tractable approximation algorithm for \cyclewidth.
\end{corollary}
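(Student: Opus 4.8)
The plan is to assemble the corollary from the two functional bounds between $\cycWidth{\cdot}$ and $\dtw{\cdot}$ together with the known fixed-parameter tractable constant-factor approximation for directed treewidth. The first bound, $\cycWidth{D}\le 2\dtw{D}$, is \cref{lem:cyw-leq-dtw}, and its proof (via \cref{lem:leaf_decomp,lem:cubic-leaf-decomp}) is algorithmic: from a directed \treeDecomp of width $k$ it produces, in polynomial time, a cycle decomposition of width at most $2k$. The second bound is the converse inequality $\dtw{D}\le h(\cycWidth{D})$ for some monotone function $h\colon\N\to\N$; this is exactly what the Directed Grid Theorem (\cref{thm:DGT}) yields once one knows, as established in the remainder of this section, that \cyclewidth is monotone under butterfly minors and that $\CylGrid{k}$ has \cyclewidth tending to infinity with $k$ (large $\dtw{D}$ forces a large cylindrical grid butterfly minor, which in turn forces $\cycWidth{D}$ to be large). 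Finally, by \cite{dehmer2014quantitative} (using well-linked sets or havens as obstructions, cf.\@ \cite{reed1999directed,johnson2001directed}) there is an algorithm that, given $D$ and $\ell\in\N$, runs in time $f(\ell)\cdot n^{O(1)}$ and either outputs a directed \treeDecomp of $D$ of width $O(\ell)$ or correctly reports $\dtw{D}>\ell$.

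Given these ingredients, the \cyclewidth approximation algorithm proceeds as follows. On input $D$ and $k\in\N$, run the directed-treewidth approximation with parameter $\ell\coloneqq h(k)$. If it returns a directed \treeDecomp of width $O(h(k))$, push it through the polynomial-time construction behind \cref{lem:cyw-leq-dtw} to obtain a cycle decomposition of $D$ of width $O(h(k))=:g(k)$ and output it. If instead it reports $\dtw{D}>h(k)$, then combining this with $\dtw{D}\le h(\cycWidth{D})$ and the monotonicity of $h$ gives $\cycWidth{D}>k$, which we report.

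For correctness as an approximation algorithm one checks: if $\cycWidth{D}\le k$ then $\dtw{D}\le h(\cycWidth{D})\le h(k)$, so the directed-treewidth approximation cannot report $\dtw{D}>h(k)$; hence we land in the first branch and output a cycle decomposition of width at most $g(k)$, as required of a $g$-approximation. The running time is $f(h(k))\cdot n^{O(1)}$ plus polynomial overhead from \cref{lem:leaf_decomp,lem:cubic-leaf-decomp,lem:cyw-leq-dtw}, hence fixed-parameter tractable in $k$.

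The main obstacle is not the bookkeeping above but the second ingredient, the inequality $\dtw{D}\le h(\cycWidth{D})$: proving it needs the butterfly-minor monotonicity of \cyclewidth and a quantitative lower bound on the \cyclewidth of cylindrical grids, which is where the substantive work of this section lies. Within the combination itself the only subtlety is to invoke the directed-treewidth approximation with parameter $h(k)$ rather than $k$, and to have $h$ monotone so that a negative answer of that subroutine transfers back to a correct statement about $\cycWidth{D}$.
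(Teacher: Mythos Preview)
Your argument is correct, and the overall mechanism---feed the FPT constant-factor approximation for directed \treewidth into the polynomial-time conversion of \cref{lem:leaf_decomp,lem:cubic-leaf-decomp,lem:cyw-leq-dtw}---is exactly what the paper does. The one genuine difference is scope: the paper places the corollary immediately after \cref{lem:cyw-leq-dtw} and derives it using only the upper bound $\cycWidth{D}\le 2\dtw{D}$ together with the external FPT approximation for $\dtw$, explicitly noting afterwards that ``at this point we cannot make any statement on the quality of the approximation since we do not know lower bounds for \cyclewidth.'' You instead pull in the converse bound $\dtw{D}\le h(\cycWidth{D})$ from the next subsection (via \cref{thm:DGT}, butterfly-minor monotonicity, and the grid lower bound) so that both the running time and the output width are provably bounded in terms of $\cycWidth{D}$. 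Your version is therefore more complete as a standalone proof of an FPT approximation parameterised by \cyclewidth, at the cost of forward references; the paper's version is lighter but leaves the approximation guarantee (and, strictly speaking, why the running time is FPT in $\cycWidth{D}$ rather than $\dtw{D}$) to be settled later.
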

	
Please note that at this point we cannot make any statement on the
quality of the approximation since we do not know lower bounds for
\cyclewidth.
	
\subsection{A Grid Theorem for \Cyclewidth}
	
As we have seen in the previous subsection, \cyclewidth is bounded
from above by directed \treewidth.  The goal of this subsection is to
establish a lower bound.  Here we face a special challenge.  While
most width parameters, including directed \treewidth, imply
separations of bounded size, namely in the width of the decomposition,
\cyclewidth does not immediately imply the existence of a bounded size
separation.  Moreover, it is not immediately clear whether there
exists a function $f\colon\N\rightarrow\N$ such that, given a digraph
$D$ and a cut $\cut{X}$ in $D$, there is always a set
$S\subseteq\fkt{V}{D}$ that hits all directed cycles crossing
$\cut{X}$ and satisfying $\Abs{S}\leq\fkt{f}{\cycPor{\cut{X}}}$.
	
We will take a different approach.  To show that directed \treewidth
poses, qualitatively, as a lower bound for \cyclewidth, it suffices to
show that an obstruction for directed \treewidth also gives a lower
bound on \cyclewidth.  This will imply that any graph of large
directed \treewidth must also have high \cyclewidth.
	
In order to establish such a result we will show that the \cyclewidth
of a \digraph $D$ is an upper bound on the \cyclewidth of any
butterfly minor $H$ of $D$.  Then, in a second step we will show that
the \cyclewidth of the cylindrical grid depends on its order.  By
using the Directed Grid Theorem and \cref{lem:cyw-leq-dtw} we will
then obtain the equivalence of \cyclewidth and directed \treewidth as
desired.
	
\begin{theorem}
  \label{thm:cw_minor_closed}
  If $D$ is a \digraph and $D'$ is a butterfly minor of $D$, then
  $\cycWidth{D'} \leq \cycWidth{D}$.
\end{theorem}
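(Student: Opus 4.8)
The plan is to prove the statement in two reductions, matching the two operations that generate butterfly minors: taking a subgraph, and performing a single butterfly contraction. Since $\cycWidth{\cdot}$ is monotone under both of these (as I will argue), a straightforward induction on the number of butterfly contractions, together with the subgraph case, yields the theorem. So it suffices to handle (a) $D' = D - v$ or $D' = D - e$, and (b) $D' = D/e$ for a butterfly-contractible edge $e$.

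For the subgraph case, fix an optimal cycle decomposition $\br{T,\varphi}$ of $D$ of width $\cycWidth{D}$. If $D'=D-e$, the same decomposition $\br{T,\varphi}$ works for $D'$: every cut $\cut{t_1t_2}$ of $D'$ is contained in the corresponding cut of $D$, and every family of pairwise disjoint directed cycles in $D'$ is also such a family in $D$, so the cycle porosity can only go down. If $D' = D - v$, I would contract the leaf of $T$ mapped to $v$ (delete that leaf, then suppress the resulting degree-two vertex, as in the end of the proof of \cref{lem:cyw-leq-dtw}) to get a cubic tree $T'$ on the remaining leaves; the cuts induced by edges of $T'$ are exactly the restrictions to $V(D')$ of cuts of $T$, and again every cycle family in $D-v$ lives in $D$, so the width does not increase.

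The heart of the matter is (b). Let $e=(u,v)$ be butterfly-contractible, say $e$ is the only outgoing edge of $u$ (the other case is symmetric, or follows by reversing all edges, which the excerpt notes preserves cycle decompositions), and let $x_{u,v}$ be the new vertex of $D' = D/e$. Take an optimal cycle decomposition $\br{T,\varphi}$ of $D$; let $\ell_u,\ell_v$ be the leaves of $T$ with $\varphi(\ell_u)=u$, $\varphi(\ell_v)=v$. I would build a cubic tree $T'$ with leaf set $(\Leaves{T}\setminus\Set{\ell_u,\ell_v})\cup\Set{\ell_{x}}$ and bijection $\varphi'$ that agrees with $\varphi$ away from $\Set{u,v}$ and sends $\ell_x \mapsto x_{u,v}$: concretely, delete $\ell_v$ and suppress the degree-two vertex it creates, then relabel $\ell_u$ as $\ell_x$. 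The key claim is that for every edge $t_1t_2$ of $T'$, the cycle porosity in $D'$ of the induced cut is at most the cycle porosity in $D$ of the "corresponding" cut (the one whose shores are obtained by replacing $x_{u,v}$ by $\Set{u,v}$). To see this, take a family $\mathcal{C}'$ of pairwise disjoint directed cycles in $D'$ realising the porosity of $\cut{t_1t_2}$ in $D'$. I would lift $\mathcal{C}'$ to a family $\mathcal{C}$ of pairwise disjoint directed cycles in $D$ by "un-contracting" $x_{u,v}$: a cycle of $\mathcal{C}'$ through $x_{u,v}$ uses an incoming edge $(w,x_{u,v})$ and an outgoing edge $(x_{u,v},z)$; since $e$ is the only outgoing edge of $u$, the edge $(x_{u,v},z)$ must come from $(v,z)\in\E{D}$, while $(w,x_{u,v})$ comes from $(w,u)$ or $(w,v)$ — in either case we can route the cycle in $D$ either as $\cdots w\,u\,v\,z\cdots$ (using $e$) or as $\cdots w\,v\,z\cdots$, obtaining a genuine directed cycle in $D$. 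Edges of $\mathcal{C}'$ not incident to $x_{u,v}$ are edges of $D$ unchanged. The resulting cycles are still pairwise disjoint (the only shared vertex could be $x_{u,v}$, but at most one cycle of $\mathcal{C}'$ uses it). Finally I would check that each edge of $\mathcal{C}'$ crossing $\cut{t_1t_2}$ in $D'$ maps to an edge of the corresponding cut in $D$, so $|\mathcal{C} \cap \text{cut}| \ge |\mathcal{C}' \cap \text{cut}|$; since the corresponding cut in $D$ is one of the cuts of $\br{T,\varphi}$ (up to the harmless relabelling of the $u,v$-leaves), its porosity is at most $\cycWidth{D}$.

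The step I expect to be the main obstacle is making the last claim fully precise, in particular the bookkeeping for edges of $\cut{t_1t_2}$ that are incident to $x_{u,v}$: one must verify that an edge $(x_{u,v},z)$ or $(w,x_{u,v})$ crossing the cut in $D'$ corresponds to exactly one edge crossing the corresponding cut in $D$ (so no overcounting occurs), and handle the edge case where both $u$ and $v$ — equivalently just $x_{u,v}$, which sits on a single side of every cut of $T'$ — together with the possibility that the lifted cycle's "detour" through $u$ stays on one shore. A clean way to organise this is to note that in $T'$ the leaf $\ell_x$ lies entirely on one shore of any cut, so in $D$ the whole set $\Set{u,v}$ lies on the same shore; hence the edge $e=(u,v)$ never crosses the corresponding cut, and the correspondence between crossing edges of $\mathcal{C}'$ and of $\mathcal{C}$ is a bijection on the relevant edges. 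Once this is set up, the inequality $\cycPor{\cut{t_1t_2}}_{D'} \le \cycPor{\cut{\cdot}}_{D} \le \cycWidth{D}$ holds for every edge of $T'$, so $\br{T',\varphi'}$ witnesses $\cycWidth{D'}\le\cycWidth{D}$, completing the induction.
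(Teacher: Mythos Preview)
Your overall strategy matches the paper's: handle subgraphs and single contractions separately, lift cycle families from $D'$ to $D$, and compare crossings. The subgraph case is fine. But the contraction case has a genuine gap that breaks the proof.

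\textbf{The wrong leaf is deleted.} You delete $\ell_v$ and place $x_{u,v}$ at $\ell_u$. The paper does the opposite: it deletes $\ell_u$ and places $x_{u,v}$ at $\ell_v$. This is not a cosmetic choice. Because $e=(u,v)$ is the only edge leaving $u$, every directed cycle through $u$ passes through $v$, but \emph{not} conversely; the vertex $x_{u,v}$ in $D'$ inherits all of $v$'s outgoing edges and behaves, cycle-wise, like $v$. Placing $x_{u,v}$ at $\ell_u$ can strictly increase the width. Concretely, take $D$ on $\{u,v,a,b,c,d\}$ with edges $u\to v$, $a\to v$, $v\to b$, $b\to a$, $c\to d$, $d\to c$ (so the only cycles are $v,b,a$ and $c,d$, and $u$ lies on none), and take $T$ to be the caterpillar with leaves in order $u,c,d,a,v,b$. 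Then $(T,\varphi)$ has width $2$. Your construction deletes $\ell_v$ and yields a tree whose edge separating $\{x,c\}$ from $\{d,a,b\}$ has cycle porosity $4$ in $D'$: the disjoint cycles $x,b,a$ and $c,d$ each cross it twice. So your $(T',\varphi')$ does \emph{not} witness $\cycWidth{D'}\le\cycWidth{D}$.

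\textbf{The ``corresponding cut'' claim is false.} You assert that the cut in $D$ obtained by replacing $x_{u,v}$ with $\{u,v\}$ is always one of the cuts of $(T,\varphi)$. This holds only for edges of $T'$ that did not separate $\ell_u$ from $\ell_v$ in $T$. For any edge on the $T$-path between $\ell_u$ and $\ell_v$ that survives in $T'$, the $T$-cut has $u$ and $v$ on \emph{opposite} shores, so your ``corresponding cut'' is not a cut of $(T,\varphi)$ at all, and you cannot bound its porosity by $\cycWidth{D}$. Your final paragraph tries to argue this away (``$\ell_x$ lies entirely on one shore, so in $D$ the whole set $\{u,v\}$ lies on the same shore''), but that inference is simply wrong: the position of $\ell_x$ in $T'$ tells you nothing about where $\ell_v$ sat in $T$.

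The fix is exactly what the paper does: delete $\ell_u$, put $x_{u,v}$ at $\ell_v$, and for an edge of $T'$ whose $T$-version separates $u$ from $v$, compare directly against the actual $T$-cut $\cut{X}$ (with $v\in X$, $u\notin X$). Any cycle in $D'$ through $x_{u,v}$ lifts to a cycle in $D$ through $v$, possibly detouring through $u$ via the extra edge $(u,v)\in\cut{X}$; a short case check shows the lifted cycle crosses $\cut{X}$ at least as often as the original crosses $\cut{X'}$.
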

\begin{proof}
  We first note that the \cyclewidth is closed under taking subgraphs.
  To see this let $D' \subseteq D$ be a subgraph of $D$ and
  $(T,\varphi)$ a cycle decomposition of $D$.  We delete every leaf
  that corresponds to a vertex in $\V{D} \setminus \V{D'}$ and
  eliminate vertices of degree two if needed as in the proof of
  \cref{lem:cyw-leq-dtw} to obtain a new cycle decomposition
  $(T',\varphi')$ of $D'$ whose maximal cycle porosity is at most the
  maximal cycle porosity of $(T,\varphi)$.  So
  $\cycWidth{D'} \leq \cycWidth{D}$.
		
  Next, we want to show that butterfly contracting an edge in $D$ does
  not increase the \cyclewidth.  Let $D' \coloneqq D/e$ for some edge
  $e = \br{u,v} \in \E{D}$.  Since $e$ is butterfly contractible it is
  the only outgoing edge from $u$ or the only ingoing edge of $v$. We
  assume the former case first. Note that every cycle containing $u$ also
  contains $v$.
		
  We obtain $(T',\varphi')$ from $(T,\varphi)$ by deleting the leaf
  $\ell$ of $T$ mapped to $u$ and contracting one of the two edges in
  $T$ incident with the unique neighbour of $\ell$ in $T$ in order to
  obtain the cubic tree $T'$.  Let $x_{u,v}$ be the contraction
  vertex, we set $\fkt{\phi'}{\fkt{\phi^{-1}}{v}}\coloneqq x_{u,v}$
  while leaving the mapping of the other leaves intact.
		
  All cuts in the decomposition for which $u$ and $v$ lie on the same
  side do not change their porosity.  So consider a cut $\cut{X}$,
  $v\in X$, induced by $(T,\varphi)$ that separates $u$ and~$v$, then
  $\br{T',\varphi'}$ induces a cut $\cut{X'}$ where
  $X'=\br{X\setminus\Set{v}}\cup\Set{x_{u,v}}$ and
  $\V{D/e}\setminus X'=\V{D}\setminus \br{X\cup\Set{u}}$.
		
  Suppose there is a family of pairwise disjoint directed cycles
  $\mathcal{C}$ in $D$ that contains a cycle~$C$ with $u\in\fkt{V}{C}$
  and satisfies $\Abs{\cut{X}\cap\E{\mathcal{C}}}=\cycPor{\cut{X}}$ as
  well as $\cut{X}\cap\E{C}\neq\emptyset$.  Let $C'$ be the cycle in
  $D/e$ obtained from $C$ after the contraction of $e$.
		
  Let $\mathcal{C'}$ be a family of directed cycles in $D/e$. At most
  one cycle $C$ contains $x_{u,v}$. If $C$ also exists in $D$, then
  $\mathcal{C'}$ is a family of cycles in $D$ as well. Otherwise, as
  $(u,v)$ is the only edge leaving $u$, the predecessor of $x_{u,c}$
  on $C$ has an edge to $u$ in $D$. Thus we can construct a cycle $C'$
  in $D$ from $C$ by replacing the edge $(y,x_{u,v})$ by a path
  $y,u,v$. Then $C'$ crosses $\cut{X}$ at least as often as $C$ and
  the cycle porosity of $\br{\mathcal{C}\setminus (C)}\cup \Set{C'}$
  is at least as high as the porosity of $\mathcal{C}$.
		
  For handling the case that the edge $e$ is the only ingoing edge of
  $v$, we show that the \cyclewidth does not change if we reverse all
  directions of the edges in the graph.  That is because we still get
  exactly the same cycles just with reversed direction and they still
  cross the same cuts.  Therefore the decomposition stays exactly the
  same with the same porosities for all cuts.
		
  By these arguments $\cycWidth{D'} \leq \cycWidth{D}$ holds for every
  butterfly minor $D'$ of $D$.
\end{proof}
	
The reverse direction follows from the Directed Grid Theorem.  It says
that, whenever the directed \treewidth of a digraph is large enough,
one can find a specific butterfly minor of large width.
	
In order to use the Directed Grid Theorem for our purposes we need to
show that the cylindrical grid has unbounded \cyclewidth.  It actually
suffices to prove a statement that gives a lower bound on the
\cyclewidth of a cylindrical grid of order $n$ in terms of its order.
To do this we have to analyse how the cycles in such a grid behave
when a reasonably large part of it is separated from the rest.
	
We call a cut $\cut{X}$ in a digraph $D$ \emph{balanced} if
$\Abs{X}\geq \frac{\Abs{\V{D}}}{3}$ and
$\Abs{\V{D}\setminus X}\geq\frac{\Abs{\V{D}}}{3}$.
	
\begin{lemma}
  \label{thm:cyl_grid_high_cw}
  The cylindrical grid of order $k$ has \cyclewidth at least
  $\frac{2}{3}k$.
\end{lemma}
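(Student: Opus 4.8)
The plan is to bound, for an arbitrary cycle decomposition $\br{T,\varphi}$ of the cylindrical grid $\CylGrid{k}$, the cycle porosity of some edge of $T$ from below by $\tfrac{2}{3}k$. The starting point is the standard fact that every cubic tree $T$ with a bijection $\varphi$ to the vertex set of an $n$-vertex digraph has an edge $e$ inducing a \emph{balanced} cut $\cut{X}$ — i.e. both shores have at least $n/3$ vertices. (This is the usual ``balanced separator in a tree'' argument: root $T$ arbitrarily, walk down towards the heavier subtree; the first time the subtree size drops below $2n/3$ it is still at least $n/3$ because the tree is cubic and bags have size at most one.) Here $n = k^2 + \text{(number of rim/hub path vertices)}$, but the exact count does not matter; what matters is that both $\Abs{X}$ and $\Abs{\V{D}\setminus X}$ are a constant fraction of $\Abs{\V{D}}$.

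\medskip\noindent\textbf{Key geometric step.} Fix such a balanced cut $\cut{X}$. The $k$ concentric directed cycles $C_1,\dots,C_k$ of $\CylGrid{k}$ are pairwise vertex-disjoint, so they form an admissible family $\mathcal{C}$ in the definition of cycle porosity, and it suffices to show that $\cut{X}$ intersects the edge set of at least $\tfrac{2}{3}k$ of them. Call a concentric cycle $C_i$ \emph{monochromatic} if $\V{C_i}\subseteq X$ or $\V{C_i}\subseteq\V{D}\setminus X$, and \emph{bichromatic} otherwise; a bichromatic cycle contributes at least one (in fact at least two) edges to $\cut{X}$. So I need: at most $\tfrac{1}{3}k$ of the $C_i$ are monochromatic — equivalently, at most $\tfrac{1}{6}k$ lie entirely in $X$ and at most $\tfrac{1}{6}k$ lie entirely in $\V{D}\setminus X$. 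This is where the $2k$ connecting paths (the ``spokes'') do the work: each spoke $P$ is a directed path meeting all $k$ concentric cycles in one vertex each, so it also meets $\cut{X}$ whenever it has a vertex on each side. If there were more than $\tfrac{1}{6}k$ cycles fully inside $X$ and, by a symmetric sub-argument, the side $\V{D}\setminus X$ also contains many vertices spread across the spokes, one obtains many crossing edges directly; the cleanest route is a counting/averaging argument on the $k\times 2k$ ``grid of spoke-cycle incidences'', showing that a cut leaving fewer than $\tfrac{2}{3}k$ concentric cycles bichromatic forces one of the two shores to be too small, contradicting balance. I would phrase this as: if $m$ concentric cycles are bichromatic then $\cycPor{\cut{X}}\ge m$, and if $m<\tfrac{2}{3}k$ then the remaining $>\tfrac{1}{3}k$ monochromatic cycles, together with the spoke structure, confine one shore to fewer than $n/3$ vertices.

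\medskip\noindent\textbf{Carrying it out.} Concretely: order the concentric cycles $C_1,\dots,C_k$ from innermost to outermost. Because the spokes alternate direction but each spoke still visits the cycles in the order $C_1,C_2,\dots,C_k$ (or reverse), I claim the monochromatic cycles on the $X$-side and those on the $(\V{D}\setminus X)$-side occupy ``intervals'' of indices only up to the bichromatic ones — more carefully, I track, for each spoke, how many of its vertices lie in $X$; summing over all $2k$ spokes gives $\Abs{X}$ up to the rim vertices. If at least $k - m$ cycles are monochromatic with $m = \cycPor{\cut{X}}$ via $\mathcal{C}$, then averaging shows one shore has at most $\tfrac{(k-m)/2 + m}{k}\cdot(\text{spoke vertices}) + O(k)$ vertices on it; requiring both shores $\ge n/3$ forces $m \ge \tfrac{2}{3}k$. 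I will do the bookkeeping so the lower-order rim/hub vertices are absorbed, using that $\Abs{\V{D}} = \Theta(k^2)$ while the correction terms are $O(k)$, and that the $2/3$ bound is asked for exactly, so I should take the balanced-cut constant as $1/3$ and chase constants carefully (possibly the intended proof uses the $k$ concentric cycles plus the observation that a balanced cut must split the ``annulus'' and hence cut through a $\ge \tfrac{2}{3}k$-thick band of them).

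\medskip\noindent\textbf{Main obstacle.} The delicate point is \emph{not} finding the balanced cut — that is routine — but converting ``balanced'' into ``$\ge \tfrac{2}{3}k$ bichromatic concentric cycles'' with the right constant. The shores $X$ and $\V{D}\setminus X$ can be arbitrary vertex subsets, not nicely geometric regions, so I cannot assume $X$ is a union of consecutive annuli or sectors. The argument must therefore be a purely combinatorial counting over the spoke-cycle incidence pattern, and getting exactly $2/3$ (rather than some smaller fraction) will require that I use \emph{all} $k$ concentric cycles as the witnessing family $\mathcal{C}$ and extract the bound from the fact that each of the $\Theta(k^2)$ vertices of the grid lies on exactly one concentric cycle, so a shore of size $\ge n/3$ must ``touch'' at least $\tfrac{1}{3}\cdot\tfrac{n}{n/k}= \tfrac{k}{3}$-ish distinct cycles on each side, leaving at least $\tfrac{2}{3}k$ of them touched on both sides. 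That last sentence is essentially the whole proof once made precise, and making it precise against adversarial (non-geometric) shores is the crux.
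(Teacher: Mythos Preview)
Your approach has a genuine gap: the family of $k$ concentric cycles does \emph{not} always witness porosity $\ge \tfrac{2}{3}k$ across a balanced cut, so the counting argument you outline cannot succeed as stated. Take $X$ to be the union of the $\lceil k/3\rceil$ innermost concentric cycles. Since the concentric cycles partition $\V{\CylGrid{k}}$ and each has $2k$ vertices, $\Abs{X}\ge \tfrac{1}{3}\Abs{\V{\CylGrid{k}}}$, so the cut $\cut{X}$ is balanced; yet \emph{every} concentric cycle is monochromatic and contributes zero edges to $\cut{X}$. Your inclusion--exclusion at the end makes this visible: from $\Abs{S_X}\ge \Abs{X}/(2k)$ and $\Abs{S_{\bar X}}\ge \Abs{\bar X}/(2k)$ you only get $\Abs{S_X}+\Abs{S_{\bar X}}\ge k$, hence $\Abs{S_X\cap S_{\bar X}}\ge 0$, not $\ge k/3$. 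The spokes you invoke are paths, not cycles, so they cannot serve directly as members of the family $\mathcal{C}$ in the definition of cycle porosity.

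The paper's proof fixes exactly this by a case distinction. If some shore contains no complete concentric cycle, then your counting \emph{does} go through: that shore touches at least $k/3$ concentric cycles (by the pigeonhole you wrote), all of them bichromatic, each contributing at least two cut edges, giving porosity $\ge \tfrac{2}{3}k$. In the remaining case---both shores contain a full concentric cycle, say $C_{\text{in}}\subseteq X$ and $C_{\text{out}}\subseteq \bar X$---the paper abandons the concentric-cycle family entirely and instead builds a \emph{single} directed cycle that alternately uses the $2k$ spokes between $C_{\text{in}}$ and $C_{\text{out}}$, taking one edge along $C_{\text{in}}$ and then along $C_{\text{out}}$ to switch spokes. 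This weaving cycle crosses $\cut{X}$ at least $2k$ times. So the missing idea is precisely this second construction; without it the bound simply fails on cuts of the ``annulus'' type above.
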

\begin{proof}		
  Let $\CylGrid{k}$ be the cylindrical grid of order $k$ and let
  $(T,\varphi)$ be an optimal cycle decomposition of $\CylGrid{k}$.
		
  First we need to show that $T$ contains an edge such that the cut
  induced by this edge is balanced in $\CylGrid{k}$.  Such a cut can
  be found as follows.  We direct every edge of $T$ such that it
  points in direction of the subtree of $T$ containing more vertices
  of $\CylGrid{k}$.  If both sides contain exactly half the vertices
  we found the balanced cut.  Otherwise every edge can be directed and
  no two edges can point away from each other.  Also all leaf edges
  point away from the leaf.  So there has to be an inner vertex $v$
  with only ingoing edges, this vertex defines three subtrees of $T$,
  each separated from $v$ by one of its three incident edges.
  Each two subtrees contain together at least half of the
  vertices.  If there is a subtree with less than one third of the
  vertices, then the other two edges induce balanced cuts.  Otherwise
  all three subtrees contain exactly one third of the vertices and all
  three edges induce balanced cuts.
		
  Consider such a balanced cut $\Cut=\cut{e}$ in $\CylGrid{k}$ induced
  by an edge $e\in\E{T}$.  There are two cases: either each shore of
  $f$ contains one of the concentric cycles of $\CylGrid{k}$ or one of
  its shores does not contain any of the concentric cycles of the grid
  completely.
		
  In the case where each shore of $f$ contains one of the concentric
  cycles of $\CylGrid{k}$ we are able to construct a cycle $C$ that
  contains $2k$ edges of $f$.  Let $C_{\text{in}}$ be one of the
  concentric cycles of $\CylGrid{k}$ that is completely on one of the
  shores of $f$, which we will call the inner shore, and
  $C_{\text{out}}$ a concentric cycle of $\CylGrid{k}$ on the other
  shore.  We call the shortest paths from $C_{out}$ to $C_{in}$ ingoing
  and the shortest paths in the other direction outgoing.
		
  We start on a vertex of $C_{\text{out}}$ where it intersects an
  ingoing path of $\CylGrid{k}$.  Then we walk along the ingoing path
  until we meet $C_{\text{in}}$ and walk along it for an edge.  There
  we meet an outgoing path and walk along it until we intersect the
  outer cycle again.  This we repeat until we reach the starting
  vertex.  Since we used all in- and outgoing paths of $\CylGrid{k}$,
  there are $2k$ subpaths of $C$ crossing $\Cut$ at least once,
  therefore $C$ contains at least $2k$ edges of $\Cut$.  Thus $\Cut$
  has cycle porosity at least $2k > \frac{2}{3}k$.
		
  Next, consider the case that there is a shore of $f$ that does not
  contain any concentric cycle of $\CylGrid{k}$.  The other shore of
  $\Cut$ can contain at most two third of the concentric cycles of
  $\CylGrid{k}$ as $f$ is balanced.  Therefore, the remaining at least
  $\frac{k}{3}$ cycles of $\CylGrid{k}$ cross $\Cut$.  Since they are
  cycles, each of them meets $f$ in at least two edges and thus
  $\cycPor{\cut{e}}\geq\frac{2}{3}k$.  So, finally
  $\cycWidth{\CylGrid{k}} \geq \frac{2}{3}k$.
	\end{proof}
	
	Together with \cref{thm:cw_minor_closed} this implies the following corollary.
	
	\begin{corollary}\label{cor:gridminor_highCycW}
		If a digraph $D$ has the cylindrical grid of order $k$ as a butterfly minor, then its \cyclewidth is at least $\frac{2}{3}k$.
	\end{corollary}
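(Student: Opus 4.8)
The statement to prove is Corollary~\ref{cor:gridminor_highCycW}: if a digraph $D$ contains the cylindrical grid $\CylGrid{k}$ as a butterfly minor, then $\cycWidth{D} \geq \frac{2}{3}k$. The plan is to simply chain together the two results already established in this subsection. First, I would invoke \cref{thm:cw_minor_closed}, which states that $\cycWidth$ is monotone under butterfly minors: since $\CylGrid{k}$ is a butterfly minor of $D$, we get $\cycWidth{\CylGrid{k}} \leq \cycWidth{D}$. Second, I would invoke \cref{thm:cyl_grid_high_cw}, which gives the lower bound $\cycWidth{\CylGrid{k}} \geq \frac{2}{3}k$ on the cyclewidth of the cylindrical grid itself. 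Combining these two inequalities yields $\frac{2}{3}k \leq \cycWidth{\CylGrid{k}} \leq \cycWidth{D}$, which is exactly the claim.

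There is essentially no obstacle here — the corollary is a one-line consequence of the two preceding results, which is why it is stated as a corollary rather than a lemma or theorem. The only care needed is to make sure the direction of the inequality in \cref{thm:cw_minor_closed} is applied correctly (it says a butterfly minor has \emph{at most} the cyclewidth of the ambient graph, so the grid being the minor gives a \emph{lower} bound on $\cycWidth{D}$), but this is immediate. So the proof is just: "By \cref{thm:cw_minor_closed} we have $\cycWidth{D} \geq \cycWidth{\CylGrid{k}}$, and by \cref{thm:cyl_grid_high_cw} we have $\cycWidth{\CylGrid{k}} \geq \frac{2}{3}k$; the claim follows." No calculations and no new ideas are required.

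If anything were to be elaborated, it would be the observation that the minor-monotonicity combined with the grid lower bound is precisely the mechanism by which the Directed Grid Theorem (\cref{thm:DGT}) will later transfer to cyclewidth: since large directed treewidth forces a large cylindrical grid butterfly minor, this corollary then forces large cyclewidth. But for the corollary statement alone, nothing beyond the two-step composition is needed.
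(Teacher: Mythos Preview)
Your proposal is correct and matches the paper's approach exactly: the paper simply states that the corollary follows from \cref{thm:cyl_grid_high_cw} together with \cref{thm:cw_minor_closed}, with no further argument given. Your two-step composition (minor-monotonicity of cyclewidth plus the grid lower bound) is precisely what is intended.
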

	
	\begin{theorem}
		A class $\mathcal{D}$ of digraphs is a class of bounded directed \treewidth, if and only if it is a class of bounded \cyclewidth.
	\end{theorem}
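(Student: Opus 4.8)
The plan is to prove both directions by combining the results already established in this section. The statement says that a class $\mathcal{D}$ of digraphs has bounded directed \treewidth if and only if it has bounded \cyclewidth; equivalently, there are functions bounding each parameter in terms of the other.

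First, the easy direction. If $\mathcal{D}$ has bounded directed \treewidth, say $\dtw{D} \leq w$ for all $D \in \mathcal{D}$, then \cref{lem:cyw-leq-dtw} immediately gives $\cycWidth{D} \leq 2\dtw{D} \leq 2w$, so $\mathcal{D}$ has bounded \cyclewidth.

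For the converse, I would argue by contraposition using the Directed Grid Theorem. Suppose $\mathcal{D}$ does not have bounded directed \treewidth. Then for every $k$ there is a digraph $D_k \in \mathcal{D}$ with $\dtw{D_k} > f(k)$, where $f$ is the function from \cref{thm:DGT}. By \cref{thm:DGT}, each such $D_k$ contains the cylindrical grid $\CylGrid{k}$ of order $k$ as a butterfly minor. Then \cref{cor:gridminor_highCycW} (or equivalently \cref{thm:cw_minor_closed} together with \cref{thm:cyl_grid_high_cw}) yields $\cycWidth{D_k} \geq \frac{2}{3}k$. Since $k$ was arbitrary, $\mathcal{D}$ cannot have bounded \cyclewidth. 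This completes the contrapositive, hence the converse.

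There is essentially no obstacle here: the statement is a clean corollary assembled from \cref{lem:cyw-leq-dtw}, \cref{thm:DGT}, and \cref{cor:gridminor_highCycW}, all of which are already available. The only minor care needed is in phrasing "bounded" correctly — one should note that the two directions give bounds of different qualitative strength: from directed \treewidth to \cyclewidth we get the explicit linear bound $\cycWidth{D} \leq 2\dtw{D}$, while from \cyclewidth back to directed \treewidth we only obtain a bound of the form $\dtw{D} \leq g(\cycWidth{D})$ for some function $g$ derived from the (non-explicit) grid function $f$, since the Directed Grid Theorem does not provide explicit bounds. This asymmetry is worth stating but does not affect the equivalence of boundedness.
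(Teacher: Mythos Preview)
Your proof is correct and follows essentially the same approach as the paper: one direction is immediate from \cref{lem:cyw-leq-dtw}, and the other is by contraposition via the Directed Grid Theorem (\cref{thm:DGT}) combined with \cref{cor:gridminor_highCycW}. The additional remark on the asymmetry of the bounds is accurate and consistent with the paper's discussion.
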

	
	\begin{proof}
		Let $\mathcal{D}$ be a class of digraphs and let $f$
                be the function from \cref{thm:DGT}.
		Suppose $\mathcal{C}$ has unbounded directed \treewidth, then for each $n\in\N$ there is a digraph $D'_n\in\mathcal{D}$ such that $\dtw{D'}\geq f(n)$.
		By \cref{thm:DGT}, there is a digraph $D_n\in\mathcal{D}$ that contains the cylindrical grid of order $n$ as a butterfly minor.
		Therefore, $\cycWidth{D_n}\geq\frac{2}{3}n$ by
		\cref{cor:gridminor_highCycW} and thus $\mathcal{C}$ has also
		unbounded \cyclewidth. The other direction is \cref{lem:cyw-leq-dtw}.
	\end{proof}
	
	We conclude this section by reformulation of \cref{thm:DGT} into a grid theorem for \cyclewidth.
	This is a direct corollary of the main result of this section.
	
	\begin{theorem}\label{cor:cywgridthm}
		There is a function $f \colon\N \to \N$ such that every digraph $D$ either satisfies $\cycWidth{D}\leq\ f(k)$, or contains the cylindrical grid of order $k$ as a butterfly minor.
	\end{theorem}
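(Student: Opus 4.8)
The plan is to obtain this statement immediately by combining the Directed Grid Theorem (\cref{thm:DGT}) with the linear upper bound $\cycWidth{D} \leq 2\dtw{D}$ established in \cref{lem:cyw-leq-dtw}. Concretely, let $f'\colon\N\to\N$ be the function provided by \cref{thm:DGT} and set $f(k) \coloneqq 2 f'(k)$. This choice of $f$ is what I would claim works.

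Now fix a digraph $D$ and an integer $k$, and apply \cref{thm:DGT} to $D$ and $k$. There are two possibilities. Either $D$ contains the cylindrical grid of order $k$ as a butterfly minor, and then the second alternative of the theorem holds and we are done. Or $\dtw{D} \leq f'(k)$; in this case \cref{lem:cyw-leq-dtw} gives $\cycWidth{D} \leq 2\dtw{D} \leq 2 f'(k) = f(k)$, so the first alternative holds. In either case the dichotomy is satisfied, which proves the theorem.

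There is essentially no obstacle here: both ingredients are already in hand, so the argument is pure bookkeeping, and the result is really just a reformulation of the equivalence of bounded \cyclewidth and bounded directed \treewidth together with the Directed Grid Theorem. If one prefers a contrapositive phrasing, one may instead argue that a digraph with $\cycWidth{D} > f(k) = 2 f'(k)$ must have $\dtw{D} > f'(k)$ by \cref{lem:cyw-leq-dtw}, and hence, by \cref{thm:DGT}, contains the cylindrical grid of order $k$ as a butterfly minor. (Note that \cref{cor:gridminor_highCycW} plays no role in this direction; it is only needed for the converse inequality, which is not what \cref{cor:cywgridthm} asks for.)
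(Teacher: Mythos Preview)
Your proof is correct and matches the paper's approach: the paper presents \cref{cor:cywgridthm} as a direct corollary of the preceding results, and your argument spells out exactly this, combining \cref{thm:DGT} with \cref{lem:cyw-leq-dtw}. Your remark that \cref{cor:gridminor_highCycW} is not needed here is also correct.
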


	\section{Perfect Matching Width}
	\label{sec:pmw}
	
	We will now leave the world of directed graphs and consider undirected graphs with perfect matchings.
	As we have seen in \cref{obs:strongmdirections}, strongly connected
	directed graphs correspond to matching covered bipartite graphs with a
	fixed perfect matching. We discuss this correspondence in more detail
	in this section. 
	
	In this section we establish a connection between the perfect matching width of bipartite matching covered graphs and the directed \treewidth of their $M$-directions.
	
	This is done in two steps.
	First we introduce perfect matching width and relate it to the directed \treewidth of $M$-directions.
	Then, using the relation between matching minors and butterfly minors of $M$-directions of bipartite graphs, we deduce the Bipartite Matching Grid Theorem.
	
	\subsection{Perfect Matching Width and Directed Cycles}
	
	\begin{definition}[matching-porosity]
		Let $G$ be a matching covered graph and $X\subseteq\V{G}$.
		We define the \emph{\matPoros} of $\cut{X}$ as follows:
		\begin{align*}
			\matPor{\cut{X}} \coloneqq \max_{\Matching \in \perf{G}} \Abs{\Matching \cap \cut{X}}.
		\end{align*}
	\end{definition}
	A perfect matching $M\in\perf{G}$ is \emph{maximal with respect to a cut $\cut{X}$} if there is no perfect matching~$\Matching' \in \perf{G}$ such that $\cut{X} \cap \Matching \subsetneq \cut{X} \cap \Matching'$.
	
	A perfect matching~$\Matching \in \perf{G}$ \emph{maximises} a cut $\cut{X}$ if $\matPor{\cut{X}} = \Abs{\Matching \cap \cut{X}}$.
	
	\begin{definition}[Perfect Matching Width]
		\label{def:pmw}
		Let $G$ be a matching covered graph.
		A perfect matching decomposition of $G$ is a
                tuple~$\br{T,\delta}$, where $T$ is a cubic tree and
                $\delta\colon \Leaves{T} \to \V{G}$ is a bijection.
		The width of $\br{T,\delta}$ is given by $\max_{e \in \E{T}} \matPor{e}$ and the \emph{perfect matching width} of~$G$ is then defined as 
		\begin{align*}
			\pmw{G} \coloneqq \min_{\substack{\br{T,\delta} \text{ perfect matching}\\\text{decomposition of } G}} \quad \max_{e \in \E{T}} \matPor{\cut{e}}.
		\end{align*}
	\end{definition}
	
	
	
	
	
	As every vertex is contained in exactly one perfect matching edge, we
	obtain the following observation. 
	\begin{observation}
		\label{cor:moveVertex_PMWchangeSmall}
		For all matching covered graphs $G$, for all $\Shore \subseteq \V
		{G}$ and all $x \in \V{G}\setminus\Shore$,
		$\matPor{\cut{\Shore}} - 1 \le \matPor{\cut{\Shore \cup \Set{x}}} \le \matPor{\cut{\Shore}} + 1$.
	\end{observation}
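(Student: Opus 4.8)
The plan is to argue via a bijective exchange between perfect matchings that maximise $\cut{\Shore}$ and those maximising $\cut{\Shore\cup\Set{x}}$, using the fact that moving a single vertex $x$ across the cut can change the number of matching edges crossing the cut by at most the contribution of the edge of $\Matching$ covering $x$. First I would fix a matching $\Matching\in\perf{G}$ maximising $\cut{\Shore}$, so $\matPor{\cut{\Shore}} = \Abs{\Matching\cap\cut{\Shore}}$, and examine the unique edge $e_x = xy\in\Matching$ covering $x$. Since $x\in\V{G}\setminus\Shore$, moving $x$ into $\Shore$ changes, for each edge incident to $x$, whether it crosses the cut; but among the edges of $\Matching$, only $e_x$ is incident to $x$. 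Hence $\Abs{\Matching\cap\cut{\Shore\cup\Set{x}}}$ and $\Abs{\Matching\cap\cut{\Shore}}$ differ by at most one (they differ by exactly one, $\pm1$, depending on whether $y\in\Shore$ or not). This already gives $\matPor{\cut{\Shore\cup\Set{x}}}\geq \Abs{\Matching\cap\cut{\Shore\cup\Set{x}}}\geq \Abs{\Matching\cap\cut{\Shore}} - 1 = \matPor{\cut{\Shore}} - 1$, the lower bound.

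For the upper bound I would run the symmetric argument: take $\Matching'\in\perf{G}$ maximising $\cut{\Shore\cup\Set{x}}$, so $\matPor{\cut{\Shore\cup\Set{x}}} = \Abs{\Matching'\cap\cut{\Shore\cup\Set{x}}}$, and let $e'_x = xy'\in\Matching'$ be the edge of $\Matching'$ covering $x$. By the same reasoning — only $e'_x$ among the edges of $\Matching'$ is affected by moving $x$ out of $\Shore\cup\Set{x}$ back to the other shore — we get $\Abs{\Matching'\cap\cut{\Shore}}\geq \Abs{\Matching'\cap\cut{\Shore\cup\Set{x}}} - 1$. Since $\matPor{\cut{\Shore}}\geq \Abs{\Matching'\cap\cut{\Shore}}$, this yields $\matPor{\cut{\Shore}}\geq \matPor{\cut{\Shore\cup\Set{x}}} - 1$, i.e. $\matPor{\cut{\Shore\cup\Set{x}}}\leq \matPor{\cut{\Shore}} + 1$, as required.

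The only point that needs a little care — and the closest thing to an obstacle here — is to be precise that ``moving $x$ across the cut'' affects exactly the edges incident to $x$ and nothing else: an edge $uv$ with $u,v\neq x$ crosses $\cut{\Shore}$ if and only if it crosses $\cut{\Shore\cup\Set{x}}$, since its endpoints' membership in the two shores is unchanged. Combined with the observation that a matching contains at most one edge at $x$ (and in a perfect matching exactly one), this confines the entire discrepancy to a single edge, which is what drives both inequalities. No deeper matching-theoretic input (matching covered-ness, conformality, etc.) is needed; the statement is essentially a counting observation about cuts and is why it is labelled an observation rather than a lemma.
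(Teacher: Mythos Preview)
Your argument is correct and matches the paper's approach exactly: the paper states this as an observation with the one-line justification ``As every vertex is contained in exactly one perfect matching edge,'' which is precisely the fact you unpack in detail.
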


	If we are given a perfect matching $M\in\perf{G}$ of a matching covered graph $G$ and a cut $\cut{X}$ of matching porosity $k$, then there are at most $k$ vertices in $X$ that are incident with edges in $M\cap\cut{X}$.
	Hence we have to move at most $k$ vertices from one shore to the other in order to obtain a new cut where both shores are $M$-conformal.
	This leads to the following proposition.
	
	\begin{proposition}
		\label{prop:moveSetsOverCuts}
		Let $G$ be a matching covered graph, $\Shore \subseteq \V{G}$ and $\Matching \in \perf{G}$.
		Then there is an $\Matching$-\conformal set $\Shore' \subseteq \V{G}$ such that
		\begin{enumerate}
			\item $\Shore \subseteq \Shore'$,
			\item $\Abs{\Shore'} \leq \Abs{\Shore} + \matPor{\cut{\Shore}}$ and
			\item $\matPor{\cut{\Shore'}} \leq 2 \matPor{\cut{\Shore}}$.
		\end{enumerate}
	\end{proposition}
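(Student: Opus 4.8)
The plan is to start from the cut $\cut{\Shore}$ and repair it into an $\Matching$-conformal cut by moving the ``defective'' vertices --- those incident with an edge of $\Matching \cap \cut{\Shore}$ --- from $\Shore$ to $\V{G}\setminus\Shore$, or vice versa. More precisely, let $F \coloneqq \Matching \cap \cut{\Shore}$, so $\Abs{F} = \matPor{\cut{\Shore}}$ by choosing $\Matching$; actually we must be careful since $\Matching$ is \emph{given}, so set $k \coloneqq \Abs{\Matching \cap \cut{\Shore}} \le \matPor{\cut{\Shore}}$. Each edge of this intersection has exactly one endpoint in $\Shore$; let $R$ be the set of those endpoints lying in $\V{G}\setminus\Shore$, so $\Abs{R} \le k \le \matPor{\cut{\Shore}}$, and put $\Shore' \coloneqq \Shore \cup R$. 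Every edge of $\Matching$ then has both endpoints inside $\Shore'$ or both outside, which is exactly the statement that $\reduct{\Matching}{\induc{G}{\Shore'}}$ and $\reduct{\Matching}{G - \Shore'}$ are perfect matchings of the respective subgraphs; hence $\Shore'$ is $\Matching$-conformal. This immediately gives (i) and, since $\Abs{\Shore'} \le \Abs{\Shore} + \Abs{R} \le \Abs{\Shore} + \matPor{\cut{\Shore}}$, also (ii).

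For (iii) I would bound $\matPor{\cut{\Shore'}}$ by invoking \cref{cor:moveVertex_PMWchangeSmall} iteratively: adding one vertex to a shore changes the matching porosity by at most one, so $\matPor{\cut{\Shore'}} \le \matPor{\cut{\Shore}} + \Abs{R} \le \matPor{\cut{\Shore}} + \matPor{\cut{\Shore}} = 2\matPor{\cut{\Shore}}$. This is clean and uses only the already-established observation. The three claims together complete the proof.

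The main subtlety --- and the step I would be most careful about --- is the verification that $\Shore'$ is genuinely $\Matching$-conformal, i.e.\ that after moving exactly the set $R$ we have not created \emph{new} $\Matching$-edges across the new cut. The point is that an edge $ab \in \Matching$ crosses $\cut{\Shore'}$ iff exactly one of $a,b$ is in $\Shore'$; since every $\Matching$-edge that crossed $\cut{\Shore}$ has now had its ``outside'' endpoint pulled into $\Shore'$, and no $\Matching$-edge that was entirely inside $\Shore$ or entirely outside $\Shore$ has been split (because $R$ consists only of the matching-partners of vertices already in $\Shore$, and $\Matching$ is a matching so these partners are distinct and their own partners all lie in $\Shore$), the intersection $\Matching \cap \cut{\Shore'}$ is empty. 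One should also note $R$ may be empty (if $\cut{\Shore}$ is already $\Matching$-conformal), in which case $\Shore' = \Shore$ and all three conclusions hold trivially; and that $\Abs{R}$ could be strictly less than $\matPor{\cut{\Shore}}$, which only helps. No further estimates are needed.
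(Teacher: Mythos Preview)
Your proof is correct and follows exactly the approach the paper sketches in the paragraph preceding the proposition: move the at most $\matPor{\cut{\Shore}}$ many $\Matching$-partners across the cut to make both shores $\Matching$-conformal, and then invoke \cref{cor:moveVertex_PMWchangeSmall} once per moved vertex to bound the new porosity. The paper does not spell out a formal proof beyond that sketch, so your write-up is a faithful and complete elaboration of the intended argument.
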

	
	Now, if we look at the $M$-direction of a matching covered bipartite graph $G$ with $M\in\perf{G}$, then any cycle decomposition $\br{T,\varphi}$ of $\dirm{G}{M}$ can be interpreted as a decomposition of $G$ where $\varphi$ is a bijection between $\Leaves{T}$ and $M$.
	Then every edge in $T$ induces a bipartition of $\V{G}$ into $M$-conformal sets.
	The next definition relates this observation to perfect matching decompositions.
	
	\begin{definition}[$M$-Perfect Matching Width]
		Let $G$ be a matching covered graph and $\Matching \in \perf{G}$.
		The \emph{$\Matching$-perfect matching width}, $\Mpmw{\Matching}{}$, is defined as the smallest width of a perfect matching decomposition of $G$ such that for every inner edge $e$ holds if $\br{T_1,T_2} = \splitTree{T}{e}$, then $\delta\br{\Leaves{T_1}}$ and $\delta\br{\Leaves{T_2}}$ are $\Matching$-\conformal.
	\end{definition}
	
	\begin{proposition}
		\label{prop:pmw_Mpmw}
		Let $G$ be a matching covered graph and $\Matching \in \perf{G}$.
		Then $\pmw{G} \leq \Mpmw{\Matching}{G} \leq 2\pmw{G}$.
	\end{proposition}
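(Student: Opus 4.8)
The first inequality is immediate, since every decomposition witnessing $\Mpmw{\Matching}{G}$ is in particular a perfect matching decomposition of $G$, so the minimum defining $\pmw{G}$ ranges over a larger family and cannot be larger. For the second inequality the plan is to turn an optimal perfect matching decomposition $\br{T,\delta}$ of $G$, of width $k\coloneqq\pmw{G}$, into an $\Matching$-\conformal one of width at most $2k$. First I would root $T$ by subdividing an arbitrary edge and, for each node $t$ of the resulting rooted tree, write $A_t\coloneqq\delta\br{\Leaves{T_t}}$ for the shore below $t$; these form a nested family, and each $A_t$ other than the topmost one is a shore of a cut of $\br{T,\delta}$, so $\matPor{\cut{A_t}}\le k$. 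An $\Matching$-\conformal perfect matching decomposition is precisely one in which every matching edge is realised as a cherry (two leaves with a common neighbour), so building one amounts to building a branch decomposition of the \emph{set} $\Matching$ all of whose bipartitions, read as cuts $\cut{V(N)}$ of $G$ with $V(N)\coloneqq\bigcup_{ab\in N}\Set{a,b}$, have matching-porosity at most $2k$; the cherries and their incident edges then only contribute cuts with a shore of size at most two.

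The central device is the canonical rounding-down $\underline{S}\coloneqq\condset{v\in S}{\text{the $\Matching$-partner of $v$ also lies in $S$}}$. For every $S\subseteq\V{G}$ the set $\underline{S}$ is $\Matching$-\conformal, it is obtained from $S$ by deleting the at most $\Abs{\Matching\cap\cut{S}}\le\matPor{\cut{S}}$ vertices of $S$ lying on matching edges that cross $\cut{S}$, and hence $\matPor{\cut{\underline{S}}}\le 2\matPor{\cut{S}}$ by \cref{cor:moveVertex_PMWchangeSmall} (this is the ``round towards the complement'' version of \cref{prop:moveSetsOverCuts}). Crucially, this operator is monotone and shrinking, $\underline{S}\subseteq S$, so it preserves disjointness and sends the nested family $\condset{A_t}{t\in\V{T}}$ to a nested family $\condset{\underline{A_t}}{t\in\V{T}}$ of $\Matching$-\conformal sets, each within $k$ vertices of a shore of $\br{T,\delta}$ and hence of matching-porosity at most $2k$. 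For an inner node $t$ with children $t_1,t_2$, let $\Matching_t$ be the set of matching edges with one endpoint in $A_{t_1}$ and one in $A_{t_2}$; these are exactly the matching edges crossing $\cut{A_{t_1}}$ that stay inside $A_t$, so $\Abs{\Matching_t}\le k$, the vertex set $V(\Matching_t)$ is $\Matching$-\conformal of size at most $2k$, and $\underline{A_t}=\underline{A_{t_1}}\sqcup\underline{A_{t_2}}\sqcup V(\Matching_t)$. Since $ab\in\Matching_t$ exactly when $t$ is the least common ancestor of the two leaves carrying $a$ and $b$, the sets $\Matching_t$ partition $\Matching$.

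Next I would assemble the decomposition tree $T'$ from $T$ by replacing every node $t$ with $\Matching_t\neq\emptyset$ by an arbitrary branch decomposition of the set $\Matching_t$, blown up to cherries, and splicing these gadgets together along the edges of $T$ — routine cubic-tree surgery, with the obvious adjustments when some $\Matching_t$ is empty or a singleton. One then checks that $\br{T',\delta'}$ is a perfect matching decomposition, that every shore occurring at an inner edge is a union of matching edges (so $\br{T',\delta'}$ is $\Matching$-\conformal), and that every cut has matching-porosity at most $2k$. The last point reduces to three cases: cuts with a shore of size at most two (cherry and leaf edges); cuts whose shore is contained in some $V(\Matching_t)$ and hence has fewer than $2k$ vertices, so that at most $2k$ matching edges cross them; and cuts of the form $\cut{\underline{A_s}}$ inherited from edges of $T$, which have matching-porosity at most $2k$ as noted above. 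This yields $\Mpmw{\Matching}{G}\le 2k=2\pmw{G}$.

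The hard part, and the step I expect to be the main obstacle, is the consistency of the rounding together with the handful of auxiliary cuts that appear when a gadget for $\Matching_t$ is spliced back into the tree. Rounding each cut $\cut{A_t}$ independently to an $\Matching$-\conformal cut would in general destroy the nesting without which no decomposition tree exists at all; rounding \emph{down} repairs this for free — subsets of disjoint sets are disjoint — and it localises all of the irregularities of the original decomposition to the bounded, self-contained pieces $\Matching_t$. What remains delicate is that each gadget for $\Matching_t$ introduces a few auxiliary cuts whose shores are of the form $\underline{A_s}$ modified by one of the sets $V(\Matching_t)$, and the splicing must be organised so that these, too, stay at matching-porosity at most $2k$; this single estimate is where one has to be careful with the accounting, and it is exactly what pins the constant at $2\pmw{G}$ rather than something larger.
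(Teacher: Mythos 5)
Your first inequality matches the paper's, and it is immediate. For the second inequality you take a genuinely different and more elaborate route than the paper: you round every shore $A_t$ down to an $\Matching$-conformal set $\underline{A_t}$, place each matching edge of $\Matching$ at the least common ancestor of its two leaves, and splice in a gadget tree at every internal node $t$ to host the cherries of $\Matching_t$. The paper instead does something much simpler and more local: fix a transversal $X$ of $\Matching$ (one endpoint per edge), and for every matching edge $\{x,x\Matching\}$ with $x\in X$ whose two leaves are not already a cherry, delete the leaf carrying $x\Matching$ and attach a fresh cherry (mapped to $x$ and $x\Matching$) at $\delta^{-1}(x)$. In the resulting tree every old inner edge has its shore changed by moving at most one vertex per crossing matching edge (namely $x\Matching$, always to the side where $x$ sits), so at most $k$ vertices move and the porosity at most doubles by \cref{cor:moveVertex_PMWchangeSmall}; the only new inner edges are the ones directly above each fresh cherry and have porosity at most $2$. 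No gadgets, no LCA bookkeeping, no nesting argument. What the paper's approach buys is precisely that it never has to confront your ``delicate'' estimate.

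There is a genuine gap in your write-up at exactly the point you flag. Your three-case list for bounding porosity is incomplete: a shore on an internal edge of the gadget for $\Matching_t$ is generically of the form $\underline{A_{t_1}}\cup W$ or $\underline{A_{t_1}}\cup\underline{A_{t_2}}\cup W$ with $W\subsetneq V(\Matching_t)$, neither of which falls into ``contained in $V(\Matching_t)$'' nor ``equal to some $\underline{A_s}$.'' Your stated reasoning (pass from $A_{t_1}$ to $\underline{A_{t_1}}$, removing up to $k$ vertices, then union in $W$, adding up to $2k$) only yields a bound of $3k$, and comparing to $\underline{A_{t_1}}$ instead of $A_{t_1}$ yields $4k$. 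To actually hit $2k$ you need the sharper observation that the symmetric difference of $A_{t_1}$ and $\underline{A_{t_1}}\cup W$ has size at most $k$: each $\Matching$-edge crossing $\cut{A_{t_1}}$ contributes exactly \emph{one} moved vertex, because for an $\Matching_t$-edge you either add its far endpoint (if the edge is in $W$) or remove its near endpoint (if it is not), and for an $\Matching$-edge crossing out of $A_t$ you remove only its near endpoint. Once stated, this argument does close your gap and your construction works, but as written the proposal lists cases that do not cover these cuts and then concedes, without resolving, that the remaining cuts are ``where one has to be careful with the accounting.'' That concession is the missing step.
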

	
	\begin{proof}
		Clearly $\pmw{G} \leq \Mpmw{\Matching}{G}$, because the $\Mpmw{\Matching}{}$ is the width of a perfect matching decomposition.
		
		Next, we prove $\Mpmw{\Matching}{G} \leq 2\pmw{G}$.  Let
		$\br{T,\delta}$ be a perfect matching decomposition of $G$ of
		minimum width. Now let $X \subseteq \V{\Matching}$ such that
		for all $e \in \Matching$ holds $\Abs{e \cap X}=1$. Denote by
		$x\Matching$ the vertex $y$ with $xy\in \Matching$ and let
		$X'\subseteq X$ be the set of vertices $x\in X$ such that
		the path from $\delta^{-1}(x)$ to $\delta^{-1}(xM)$ in $T$ contains an inner
		edge (i.e.\@ an edge not incident with a leaf).
		
		Now we construct a new decomposition $\br{T',\delta'}$.
		We remove $\delta^{-1}(x\Matching)$ and add two new leaves to the vertex $\delta^{-1}(x)$ in $T'$.
		Moreover, the deletion of $\delta^{-1}(x\Matching)$ left a vertex of degree $2$, in order to maintain a cubic tree we contract one of the two edges incident with said degree $2$ vertex.
		Now $\delta^{-1}(x)$ has two new neighbours $a$ and $b$ which we map to the vertices $x\Matching$ and $x$ such that $\delta'(a) \coloneqq x\Matching$ and $\delta'(b) \coloneqq x$.
		The vertex~$\delta^{-1}(x)$ now is an inner vertex of $T'$
		therefore $\delta'$ is not defined on it. 
		
		The only additional inner edges in $T'$ are those where the corresponding cut separates a pair of leaves mapped to a matching edge of $\Matching$ containing a vertex in $X'$ from the rest of the graph.
		So these induce cuts of matching porosity at most 2 and $\Matching$-\conformal shores.
		
		Now consider an inner edge $e$ from $T$ and the two shores
		$\Shore$ and $\V{G}\setminus\Shore$ it induces.
		The edges of $\Matching' \subseteq \Matching$ that have vertices in both shores are at most $\pmw{G}$ many.
		Therefore by \cref{cor:moveVertex_PMWchangeSmall} the porosity of the induced cut is at most doubled.
	\end{proof}
	
	We now need the following observation.
	Let $G=\br{A\cup B,E}$ be a bipartite matching covered graph and $M,M'\in\perf{G}$ two distinct perfect matchings.
	Then the graph induced by $M\cup M'$ consists only of isolated edges and $M$-$M'$-conformal cycles.
	Moreover, the isolated edges are exactly the set $M\cap M'$.
	Let $C$ be such an $M$-$M'$-conformal cycle.
	Then in both, $\dirm{G}{M}$ and $\dirm{G}{M'}$, $C$ corresponds to a directed cycle.
	
	On the other hand let $N\in\perf{G}$ and let $C$ be a directed cycle in $\dirm{G}{N}$.
	Then $C$ corresponds to an $N$-conformal cycle $C_N$ in $G$ of exactly double the length, where $\E{C}$ and $\E{C_N}\setminus N$ coincide (up to the direction of the edges in $C$).
	Thus $\br{N\setminus\E{C_N}}\cup\br{\E{C_N}\setminus N}$ is a perfect matching of $G$.
	
	So there is a one-to-one correspondence between the directed cycles in $\dirm{G}{\Matching}$ and the $\Matching$-conformal cycles in $G$.
	Using this insight we can translate an $M$-perfect matching decomposition of $G$ to a cycle decomposition of $\dirm{G}{\Matching}$ and back.
	
	\begin{lemma}
		\label{lem:Mpmw_equ_cyw}
		Let $G$ be a bipartite and matching covered graph and $\Matching \in \perf{G}$.
		Then $\Mpmw{\Matching}{G} = \cycWidth{\dirm{G}{\Matching}}$.
	\end{lemma}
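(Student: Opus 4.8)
The plan is to exhibit an explicit correspondence between $\Matching$-perfect matching decompositions of $G$ and cycle decompositions of $\dirm{G}{\Matching}$ that preserves width, and then to observe that each side of the inequality $\Mpmw{\Matching}{G} = \cycWidth{\dirm{G}{\Matching}}$ is witnessed by the image of an optimal decomposition on the other side. The decomposition trees are the same object: a cubic tree $T$ whose leaves are in bijection with $\Matching$ (equivalently, with $\V{\dirm{G}{\Matching}}$), since the vertices of the $\Matching$-direction are exactly the edges of $\Matching$. So the only thing to check is that for a fixed cubic tree $T$ the two notions of width agree edge by edge: for every edge $e\in\E{T}$ I need
\[
  \matPor{\cut{\delta(\Leaves{T_1})}} = \cycPor{\cut{\varphi(T_1)}},
\]
where $\delta$ is the $\Matching$-conformal leaf-labelling of $G$ and $\varphi$ is the induced labelling of $\dirm{G}{\Matching}$, and $(T_1,T_2) = \splitTree{T}{e}$.

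Fix such an edge $e$ and write $\Shore \coloneqq \delta(\Leaves{T_1}) \subseteq \V{G}$; by the definition of $\Mpmw{\Matching}{}$, $\Shore$ is $\Matching$-conformal, and the corresponding vertex set of $\dirm{G}{\Matching}$ is $\{v_i : a_ib_i \in \reduct{\Matching}{\Shore}\}$. I will bound $\matPor{\cut{\Shore}}$ above by $\cycPor{\cut{\varphi(T_1)}}$ and vice versa using the one-to-one correspondence between directed cycles in $\dirm{G}{\Matching}$ and $\Matching$-conformal cycles in $G$ spelled out just before the lemma. For the first inequality: let $\Matching' \in \perf{G}$ maximise $\cut{\Shore}$. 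The symmetric difference $\Matching \triangle \Matching'$ is a disjoint union of $\Matching$-$\Matching'$-conformal cycles $C_1,\dots,C_r$ plus isolated edges in $\Matching \cap \Matching'$; the isolated edges contribute nothing to $\cut{\Shore}$ because both endpoints are $\Matching$-matched and hence lie on the same shore (as $\Shore$ is $\Matching$-conformal, the $\Matching$-partner of a vertex of $\Shore$ is again in $\Shore$). Each $C_j$ becomes a directed cycle $\widehat{C_j}$ in $\dirm{G}{\Matching}$, and the $C_j$ are pairwise vertex-disjoint in $G$, so the $\widehat{C_j}$ are pairwise disjoint. The key computation is that an edge of $\Matching' \setminus \Matching$ lying on $C_j$ crosses $\cut{\Shore}$ in $G$ if and only if the corresponding edge of $\widehat{C_j}$ crosses $\cut{\varphi(T_1)}$ in $\dirm{G}{\Matching}$: a non-matching edge $a_i b_j$ crosses iff $a_i$ and $b_j$ are on opposite shores, which, since shores are $\Matching$-conformal, happens iff the contracted vertices $v_i$ and $v_j$ are on opposite sides of $\cut{\varphi(T_1)}$. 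Since every edge of $\Matching' \cap \cut{\Shore}$ is a non-matching edge of $\Matching$ lying on some $C_j$, summing over $j$ gives $\Abs{\Matching' \cap \cut{\Shore}} \le \Abs{\cut{\varphi(T_1)} \cap \bigcup_j \E{\widehat{C_j}}} \le \cycPor{\cut{\varphi(T_1)}}$, hence $\matPor{\cut{\Shore}} \le \cycPor{\cut{\varphi(T_1)}}$.

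For the reverse inequality, let $\mathcal{C} = \{\widehat{C_1},\dots,\widehat{C_r}\}$ be a family of pairwise disjoint directed cycles in $\dirm{G}{\Matching}$ realising $\cycPor{\cut{\varphi(T_1)}}$. Each $\widehat{C_j}$ lifts to an $\Matching$-conformal cycle $C_j$ in $G$ of twice the length; since the $\widehat{C_j}$ are vertex-disjoint and each vertex of $\dirm{G}{\Matching}$ is an $\Matching$-edge, the $C_j$ are vertex-disjoint in $G$, so $\Matching'' \coloneqq (\Matching \setminus \bigcup_j \E{C_j}) \cup \bigcup_j (\E{C_j} \setminus \Matching)$ is a perfect matching of $G$. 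By the same crossing-equivalence as above, $\Abs{\Matching'' \cap \cut{\Shore}} = \Abs{\cut{\varphi(T_1)} \cap \bigcup_j \E{\widehat{C_j}}} = \cycPor{\cut{\varphi(T_1)}}$, so $\matPor{\cut{\Shore}} \ge \cycPor{\cut{\varphi(T_1)}}$. Combining the two inequalities gives equality for every $e$, hence the widths of a fixed tree agree, and taking the minimum over all cubic trees $T$ yields $\Mpmw{\Matching}{G} = \cycWidth{\dirm{G}{\Matching}}$. The main obstacle, and the place to be careful, is precisely this crossing-equivalence: it relies on the shores being $\Matching$-conformal so that being separated by the cut is preserved when one contracts the $\Matching$-edges, and on checking that $\Matching \cap \Matching'$ edges never cross an $\Matching$-conformal cut — both of which are short but must be stated cleanly. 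One should also note that a cycle decomposition tree and an $\Matching$-perfect matching decomposition tree range over exactly the same set of objects, so the minimisations are literally over the same family; no extra transformation between trees is needed.
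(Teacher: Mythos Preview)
Your porosity computation --- taking $M'$ realising $\matPor{\cut{\Shore}}$, decomposing $M\triangle M'$ into $M$-$M'$-conformal cycles, translating these to a disjoint directed-cycle family in $\dirm{G}{M}$ via the crossing equivalence for non-$M$ edges over an $M$-conformal cut, and conversely switching $M$ along the lifts of an optimal cycle family --- is correct and is exactly the argument the paper gives.

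The gap is your claim that the two kinds of decomposition trees are ``literally the same object'' and that ``no extra transformation between trees is needed''. By Definition~\ref{def:pmw} a perfect matching decomposition (and hence an $M$-perfect matching decomposition) has $\delta\colon L(T)\to \V{G}$ a bijection, so its tree has $\Abs{\V{G}}=2\Abs{M}$ leaves, whereas a cycle decomposition of $\dirm{G}{M}$ has only $\Abs{M}$ leaves; the $M$-conformality condition is imposed only on \emph{inner} edges, and leaf edges induce trivial cuts whose single-vertex shore is never $M$-conformal. In particular your $\varphi$ cannot be a bijection from the leaves of the $M$-decomposition tree to $\V{\dirm{G}{M}}$. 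The paper handles this by an explicit transformation: in any $M$-perfect matching decomposition the leaves $\delta^{-1}(a)$ and $\delta^{-1}(b)$ for $ab\in M$ must be siblings (otherwise some inner edge on the path between them would separate $a$ from $b$, contradicting $M$-conformality of its shores), so deleting all leaves yields a cubic tree whose new leaves are labelled by $M$-edges; in the reverse direction one attaches two fresh leaves to each leaf of a cycle decomposition tree. Under this correspondence the edges of the cycle tree match the inner edges of the $M$-decomposition tree, where your porosity argument applies verbatim, while the remaining leaf edges on the $M$-side have matching porosity $1$ and never dominate the width. Once this transformation is inserted your proof coincides with the paper's.
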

	\begin{proof}

		We first prove that $\Mpmw{\Matching}{G} \geq \cycWidth{\dirm{G}{\Matching}}$.
		Assume $\Mpmw{\Matching}{G} = k$ for some $k \in \N$.
		Then there is a perfect matching decomposition $\br{T,\delta}$ of width $k$ such that all shores of the cuts induced by inner edges are $\Matching$-conformal.
		We construct a cycle decomposition $\br{T',\varphi}$ of $D \coloneqq \dirm{G}{\Matching}$.
		In $\br{T,\delta}$ the leaves containing two vertices matched by $\Matching$ share a neighbour.
		We define $T' \coloneqq T-\Leaves{T}$. Recall that matching
		edges become vertices in $\dirm{G}{\Matching}$. For
		$xy\in\Matching$ let $t_{xy}$ be the common neighbour of
		$\phi^{-1}(x)$ and $\phi^{-1}(y)$. We define $\varphi(t_{xy})
		\coloneqq xy$.
		
		Now assume this decomposition has an edge $e \in T'$ that induces a cut $\cut{\Shore}$ of cycle porosity at least $k+1$.
		Then there is a family of directed cycles $\mathcal{C}$ in $D$ witnessing this.
		This corresponds to a family of $\Matching$-alternating cycles $\mathcal{C}'$ in $G$ that also has at least $k+1$ edges in the cut $\cut{\Shore'}$ induced by $e \in T$, note that $\V{T'}\subseteq \V{T}$.
		Since $\Shore'$ is $M$-\conformal, $\Matching \cap \br{E[\mathcal{C}'] \cap \cut{\Shore'}} = \emptyset$, that is none of the edges of $\mathcal{C}'$ that lie in the cut are from $\Matching$.
		Let $\Matching'$ be the matching we obtain by switching $\Matching$ along all the cycles in $\mathcal{C}'$, that is $\Matching' \coloneqq \br{\Matching \setminus E[\mathcal{C}']} \cup \br{E[\mathcal{C}'] \setminus \Matching}$.
		Now $\Matching'$ has at least $k+1$ edges in $\cut{e}$
		contradicting that $\br{T,\delta}$ has width $k$. Therefore $\br{T',\varphi}$ is a cycle decomposition of $D$ of width at most $k$.
		
		Now we prove that $\Mpmw{\Matching}{G} \leq \cycWidth{D}$.
		Let $\cycWidth{D} = k$ for some $k \in \N$.
		Then there is a cycle decomposition $\br{T,\varphi}$ of $D$ with width $k$.
		We construct a perfect matching decomposition $\br{T',\delta}$ of $G$.
		The construction basically works the other way around as in the first part of the proof.
		For every leaf in $T$ we introduce two new child vertices that are mapped to the two endpoints of the matching edge which is contracted into a vertex of contracted in $D$.
		Formally, $\V{T'} \coloneqq \V{T}\cup\bigl\{t_i \mid
		t \in \Leaves{T}, i\in \Set{\ell,r}\bigr\}$ and $\E{T'} \coloneqq \E{T}\cup\Set{tt_i
			\mid t \in \Leaves{T}, i\in\Set{\ell,r}}$ where all $t_r$ and
		$t_\ell$ are new vertices.
		Now for all $t\in \Leaves{T}$ if $\phi(t)$ is the vertex $xy
		\in\Matching$ of $D$,
		then $\delta(t_\ell) \coloneqq x$ and $\delta(t_r) \coloneqq y$.
		Since now all pairs of vertices that are matched by $\Matching$ have a common parent vertex in $T'$ the shores of the cuts induced by inner edges are $\Matching$-\conformal.
		Therefore the width of $\br{T',\delta}$ yields an upper bound on $\Mpmw{\Matching}{G}$.
		Assume there is an edge $e \in \E{T'}$ and a matching~$\Matching'$ such that $\Abs{\Matching' \cap \cut{e}} \geq k+1$.
		We consider the subgraph of $G$ only containing edges from~$\Matching$ and $\Matching'$.
		It consists only of disjoint cycles and independent edges.
		Because none of the edges in $\Matching$ lie in $\cut{e}$, all edges of $\Matching' \cap \cut{e}$ lie on $\Matching$-$\Matching'$-conformal cycles.
		Therefore there is a family of $\Matching$-conformal cycles with more than $k$ edges in $\cut{e}$.
		This corresponds to a family of directed cycles in~$D$ having more than $k$ edges in the cut induced by $e$ in~$D$.
		This yields a contradiction to $\br{T,\varphi}$ having width $k$.
		Therefore $\br{T',\delta}$ is a perfect matching decomposition of $G$ of width $k$.
	\end{proof}
	
	By combining \cref{prop:pmw_Mpmw} and \cref{lem:Mpmw_equ_cyw} we obtain the following result as an immediate corollary.
	
	\begin{theorem}
		\label{thm:pmw_cyw}
		Let $G$ be a bipartite and matching covered graph and $\Matching \in \perf{G}$.
		Then $\pmw{G} \leq \cycWidth{\dirm{G}{\Matching}} \leq 2\pmw{G}$.
	\end{theorem}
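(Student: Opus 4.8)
The statement is an immediate consequence of the two results established just above it, so the plan is simply to chain them together. \Cref{prop:pmw_Mpmw} gives, for every perfect matching $\Matching\in\perf{G}$, the sandwich $\pmw{G}\leq\Mpmw{\Matching}{G}\leq 2\pmw{G}$, while \cref{lem:Mpmw_equ_cyw} identifies the middle quantity exactly with the cyclewidth of the $\Matching$-direction, namely $\Mpmw{\Matching}{G}=\cycWidth{\dirm{G}{\Matching}}$. Substituting this equality into the two inequalities immediately yields $\pmw{G}\leq\cycWidth{\dirm{G}{\Matching}}\leq 2\pmw{G}$, which is the claim. There is therefore nothing genuinely new to do here; the content has been front-loaded into the two ingredients.

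For the record, the heart of \cref{lem:Mpmw_equ_cyw} is the one-to-one correspondence between $\Matching$-conformal cycles in $G$ and directed cycles in $\dirm{G}{\Matching}$ noted just before that lemma: a family of pairwise disjoint directed cycles realising the cycle porosity of a cut in $\dirm{G}{\Matching}$ translates, after switching $\Matching$ along the corresponding $\Matching$-conformal cycles, into a perfect matching of $G$ with the same number of edges in the corresponding cut of $G$, and conversely (using that inner cuts of an $\Matching$-perfect matching decomposition have $\Matching$-conformal shores, so no edge of $\Matching$ crosses them). The factor two in the upper bound is inherited from \cref{prop:pmw_Mpmw}, where an arbitrary perfect matching decomposition of minimum width is rebuilt so that the two leaves mapped to a matched pair of vertices sit under a common parent; by \cref{cor:moveVertex_PMWchangeSmall} this rearrangement changes the matching porosity of each inner cut by at most the number of $\Matching$-edges crossing it, hence at most doubles it.

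If one preferred not to invoke \cref{prop:pmw_Mpmw} as a black box, the only step requiring care would be re-deriving $\cycWidth{\dirm{G}{\Matching}}\leq 2\pmw{G}$ directly from a perfect matching decomposition of $G$ of minimum width: one has to convert it into a decomposition whose inner cuts have $\Matching$-conformal shores and then bound the cost of doing so. That conversion is precisely the argument of \cref{prop:pmw_Mpmw}, and it is the sole place where a genuine (factor-two) loss occurs; both the lower bound and the translation to cycle decompositions in \cref{lem:Mpmw_equ_cyw} are lossless. Accordingly I expect no obstacle beyond bookkeeping the constant correctly.
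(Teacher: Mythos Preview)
Your proposal is correct and matches the paper's own argument exactly: the paper states the theorem as an immediate corollary obtained by combining \cref{prop:pmw_Mpmw} and \cref{lem:Mpmw_equ_cyw}, which is precisely the chaining you describe. The additional commentary you provide about the mechanics of those two ingredients is accurate but not needed for the proof itself.
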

	
	The proof of \cref{lem:Mpmw_equ_cyw} provides an algorithm to translate a cycle decomposition of $\dirm{G}{M}$ into a perfect matching decomposition of $G$.
	The cycle decomposition itself can be computed in \FPT-time from a directed \treeDecomp by \cref{cor:constantFactorCW} and thus we obtain the following corollary.
	
	\begin{corollary}\label{cor:approx_pmw}
		There is an approximation algorithm for perfect matching width on bipartite matching covered graphs running in \FPT-time.
	\end{corollary}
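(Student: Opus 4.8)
The plan is to assemble the algorithm from three components that are already established: the \FPT-approximation of \cyclewidth (\cref{cor:constantFactorCW}), the constructive equivalence between cycle decompositions of $M$-directions and $\Matching$-perfect matching decompositions contained in the proof of \cref{lem:Mpmw_equ_cyw}, and the comparison $\pmw{G}\le\cycWidth{\dirm{G}{\Matching}}\le 2\pmw{G}$ from \cref{thm:pmw_cyw}. Concretely, given a bipartite matching covered graph $G$, I would first compute an arbitrary perfect matching $\Matching\in\perf{G}$ in polynomial time (for instance by the Hopcroft--Karp algorithm) and form the $M$-direction $D\coloneqq\dirm{G}{\Matching}$; by \cref{obs:strongmdirections} this $D$ is a well-defined strongly connected digraph, and it is clearly obtainable from $G$ and $\Matching$ in polynomial time.

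Next, I would run the algorithm of \cref{cor:constantFactorCW} on $D$ to obtain a cycle decomposition $\br{T,\varphi}$ of $D$ whose width is bounded by a computable function of $\cycWidth{D}$, and then apply the construction in the proof of \cref{lem:Mpmw_equ_cyw}: each leaf of $T$ is replaced by a small gadget consisting of two new leaves mapped to the two endpoints of the corresponding matching edge of $\Matching$. This produces, in polynomial time, a perfect matching decomposition $\br{T',\delta}$ of $G$ whose width equals that of $\br{T,\varphi}$ (this is exactly the bound used in \cref{lem:Mpmw_equ_cyw} to show $\Mpmw{\Matching}{G}\le\cycWidth{D}$).

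It remains to argue quality and running time. By \cref{thm:pmw_cyw} we have $\cycWidth{D}\le 2\pmw{G}$, so the width of $\br{T',\delta}$ is at most $\alpha\bigl(2\pmw{G}\bigr)$, where $\alpha$ is the computable approximation function underlying \cref{cor:constantFactorCW}; since $\br{T',\delta}$ is a genuine perfect matching decomposition, its width is also at least $\pmw{G}$, so the output is approximately optimal. For the running time, \cref{cor:constantFactorCW} runs in time $\beta(\dtw{D})\cdot|G|^{O(1)}$, and by the Directed Grid Theorem (\cref{thm:DGT}) together with \cref{cor:gridminor_highCycW} the quantity $\dtw{D}$ is bounded by a computable function of $\cycWidth{D}\le 2\pmw{G}$; hence the whole procedure runs in time $h(\pmw{G})\cdot|G|^{O(1)}$. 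I expect the only delicate point to be this last bookkeeping: because no lower bound for \cyclewidth is currently known (as remarked after \cref{cor:constantFactorCW}), the guaranteed factor is a computable function of the optimum rather than a constant, and one has to check that all functional dependencies route through the parameter $\pmw{G}$ rather than through some uncontrolled quantity, so that both the claimed \FPT running time and the claim of being an approximation algorithm are legitimate.
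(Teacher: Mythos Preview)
Your proposal is correct and follows essentially the same route as the paper: compute an $\Matching$-direction, invoke \cref{cor:constantFactorCW} to obtain a cycle decomposition, and translate it to a perfect matching decomposition via the construction in the proof of \cref{lem:Mpmw_equ_cyw}, with \cref{thm:pmw_cyw} controlling the approximation quality. You spell out more detail than the paper does---in particular the preliminary step of computing $\Matching$ and the bookkeeping that routes the running-time parameter through $\dtw{D}$, the Directed Grid Theorem, and $\cycWidth{D}\le 2\pmw{G}$---and your closing caveat about the approximation factor being a computable function rather than a constant matches the paper's own remark following the corollary.
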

	
	Please note here that the quality of the approximation provided by \cref{cor:approx_pmw} depends on the function of the lower bound for \cyclewidth in terms of directed \treewidth.
	Since the lower bound presented in this paper is essentially the function from the Directed Grid Theorem, we can only state that there is some function bounding the quality of this approximation.
	It would be interesting to see whether this can be turned into a constant factor.
	
	\subsection{The Bipartite Matching Grid}
	
	The standard concept of contractions in graphs reduces the number of vertices by exactly one.
	Thus it does not preserve the property of a graph to contain a perfect matching.
	However, if we always consider conformal subgraphs and contract two
	edges at a time,
	we can find a specialised version of minors that preserve the property
	of being matching covered.
	
	The idea of matching minors appears in the work of McGuaig~\cite{mccuaig2001brace}, but the formal framework and the actual name were introduced by Norine and Thomas in~\cite{norine2007generating}.
	
	\begin{definition}[Bicontraction]
		Let $G$ be a graph and let $v_0$ be a vertex of $G$ of degree two incident to the edges $e_1=v_0v_1$ and $e_2=v_0v_2$.
		Let $H$ be obtained from $G$ by contracting both $e_1$ and $e_2$ and deleting all resulting parallel edges.
		We say that $H$ is obtained from $G$ by \emph{bicontraction} or \emph{bicontracting the vertex $v_0$}.
	\end{definition}
	
	\begin{definition}[Matching Minor]
		\label{def:matching-minor}
		Let $G$ and $H$ be graphs.
		We say that $H$ is a \emph{matching minor} of $G$ if $H$ can be obtained from a \conformal subgraph of $G$ by repeatedly bicontracting vertices of degree two.	
	\end{definition}
	
	There is a strong relation between matching minors of bipartite
	matching covered graphs and butterfly minors of strongly connected digraphs.
	
	\begin{lemma}[McGuaig, 2000 \cite{mccuaig2000even}]\label{lemma:mcguigmatminors}
		Let $G$ and $H$ be bipartite matching covered graphs.
		Then $H$ is a matching minor of $G$ if and only if there exist  perfect matchings $M\in\perf{G}$ and $M'\in\perf{H}$ such that $\dirm{H}{M'}$ is a butterfly minor of $\dirm{G}{M}$.
	\end{lemma}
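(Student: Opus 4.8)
The plan is to prove both directions of the equivalence by exhibiting, for each perfect matching $M\in\perf{G}$ and $M'\in\perf{H}$, a correspondence between conformal subgraphs together with bicontraction sequences on the undirected side and subgraphs together with butterfly contraction sequences on the directed side. The key structural fact I will rely on throughout is the bijection between $\Matching$-conformal cycles in $G$ and directed cycles in $\dirm{G}{\Matching}$, together with the more basic observation that $\dirm{\cdot}{\cdot}$ is, up to isomorphism, an inverse to the operation of ``blowing up each vertex of a strongly connected digraph into a matching edge'' (this is \cref{obs:strongmdirections}). The first thing I would do is make precise how a conformal subgraph of $G$ corresponds to a subgraph of $\dirm{G}{M}$: if $H\subseteq G$ is conformal and $M\in\perf{G}$ with $M\restriction H\in\perf{H}$, then the edges of $M$ with both endpoints in $\V{H}$ survive, and contracting exactly those yields precisely $\dirm{H}{M\restriction H}$, which sits inside $\dirm{G}{M}$ as a (not necessarily induced) subgraph — indeed every edge of $\dirm{H}{M\restriction H}$ comes from a non-matching edge of $H$, hence of $G$. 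Conversely, taking a subgraph $D'\subseteq\dirm{G}{M}$ and ``un-contracting'' recovers a conformal subgraph of $G$: keep the matching edges corresponding to $\V{D'}$ and the non-matching edges corresponding to $\E{D'}$.

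Next I would handle the single-step case: one bicontraction on the undirected side corresponds to one butterfly contraction on the directed side. Let $v_0$ be a vertex of degree two in a conformal subgraph, say with neighbours $v_1,v_2$, and suppose the matching edge at $v_0$ is $v_0v_1$ (so $v_0\in A$, $v_1\in B$, $v_2\in B$, and the non-matching edge $v_0v_2$ is oriented $v_0\to v_2$ in the $M$-direction). Bicontracting $v_0$ identifies the matching edges $v_0v_1$ and (the matching partner of $v_2$, call it $w_2$) $v_2w_2$ into a single matching edge; on the directed side this identifies the vertices corresponding to $\{v_0,v_1\}$ and $\{v_2,w_2\}$. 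Since $v_0$ had degree two, the vertex $\{v_0,v_1\}$ in $\dirm{}{}$ has the single outgoing edge to $\{v_2,w_2\}$ (coming from $v_0v_2$) — wait, more carefully, outgoing edges of $\{v_0,v_1\}$ come from non-matching edges at $v_0$ going into $A$'s partners, of which there is exactly one, namely $v_0v_2$. Hence the edge $\{v_0,v_1\}\to\{v_2,w_2\}$ is the unique out-edge of its tail and is butterfly-contractible, and butterfly-contracting it produces exactly the $M$-direction of the bicontracted graph. The symmetric case (matching edge $v_0v_2$) gives the unique in-edge situation. So one bicontraction on one side is one butterfly contraction on the other, and the resulting graphs have matching $M$-directions — the choice of $M'$ on the smaller graph is forced (it is the image of $M$), and conversely given $M'$ one reconstructs a matching on the contracted undirected graph.

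Finally I would assemble these into the two implications. For the forward direction: if $H$ is a matching minor of $G$, take a conformal subgraph $G_0\subseteq G$ and a bicontraction sequence producing $H$; fix any $M\in\perf{G}$, pass to a perfect matching of $G_0$ extending to $\perf{G}$ (here I use that conformal subgraphs inherit a restriction of some perfect matching — in fact I can choose $M\in\perf{G}$ with $M\restriction G_0\in\perf{G_0}$ by definition of conformal), push it through the bicontractions to get $M'\in\perf{H}$, and the step-by-step correspondence shows $\dirm{H}{M'}$ is obtained from the subgraph $\dirm{G_0}{M\restriction G_0}\subseteq\dirm{G}{M}$ by butterfly contractions, i.e.\ is a butterfly minor of $\dirm{G}{M}$. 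For the reverse direction: given $M\in\perf{G}$, $M'\in\perf{H}$ with $\dirm{H}{M'}$ a butterfly minor of $\dirm{G}{M}$, take a subgraph $D_0\subseteq\dirm{G}{M}$ and a butterfly-contraction sequence to $\dirm{H}{M'}$; un-contract $D_0$ to a conformal subgraph $G_0\subseteq G$ (with the restricted matching), and run the butterfly contractions backwards-through-the-correspondence as bicontractions, arriving at a graph whose $M$-direction is $\dirm{H}{M'}$; by \cref{obs:strongmdirections} — applied to the strongly connected components, or after noting matching covered bipartite graphs are determined by their $M$-direction — this graph is isomorphic to $H$, so $H$ is a matching minor of $G$. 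The main obstacle I anticipate is the bookkeeping in the single-step lemma: one must check that butterfly-contractibility of the relevant arc is \emph{equivalent} to (not merely implied by) the degree-two condition, so that the correspondence runs in both directions, and one must track how parallel edges created by bicontraction match up with the (automatic absence of) parallel edges after butterfly contraction in a simple digraph. Handling the case where the smaller graph is not connected or where the sequence of operations temporarily leaves the matching-covered world would be the subtle point, but since we only ever take conformal subgraphs and bicontractions the restriction of $M$ remains a perfect matching at every stage, so matching-coveredness of the final graph $H$ is given by hypothesis and intermediate graphs need only carry a perfect matching, which they do.
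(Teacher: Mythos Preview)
The paper does not prove this lemma; it is quoted from McGuaig~\cite{mccuaig2000even} and used as a black box. There is therefore no ``paper's own proof'' to compare against.

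That said, your approach is the standard one and is essentially how the result is proved in the source. The two key ingredients --- that $M$-conformal subgraphs of $G$ correspond to subgraphs of $\dirm{G}{M}$, and that a single bicontraction at a degree-two vertex corresponds to a single butterfly contraction of the associated arc --- are exactly right, and your case analysis (degree-two vertex in $A$ versus in $B$ giving unique out-edge versus unique in-edge) is correct. One small point you should tighten: in the reverse direction you need that every butterfly-contractible arc in the current digraph actually comes from a degree-two vertex in the current undirected graph, and this holds precisely because out-edges of the contracted vertex $\{a,b\}$ biject with non-matching edges at $a$ and in-edges biject with non-matching edges at $b$; you gesture at this but the phrasing ``equivalent to (not merely implied by)'' could be replaced by the explicit bijection. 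The parallel-edge worry you raise is harmless in the bipartite setting: the only way a bicontraction could create a parallel edge is via an edge between $v_1$ and the matching partner $w_2$ of $v_2$, and on the directed side this corresponds exactly to the back-arc $(u',u)$ which becomes a loop and is discarded --- so both sides drop the same information. Your final appeal to \cref{obs:strongmdirections} is not quite sufficient as stated (it gives uniqueness only for strongly connected digraphs), but since $H$ is matching covered its $M'$-direction is strongly connected, so the observation does apply.
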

	
	We want to establish a relation between the perfect matching
        widths of
	matching covered graphs and their matching minors.
	At this point we do not know whether the perfect matching width in general is closed under matching minors.
	We are, however, able to at least show a qualitative closure by using our result on the relation of the \cyclewidth of digraphs and their butterfly minors.
	
	\begin{proposition}
		\label{thm:pmw_minor_closed}
		Let $G$ and $H$ be matching covered bipartite graphs.
		If $H$ is a matching minor of $G$, then $\pmw{H} \leq 2\pmw{G}$.
	\end{proposition}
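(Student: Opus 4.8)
The plan is to chain together the three results already established in this paper: the equivalence up to a factor of two between perfect matching width and the cyclewidth of any $M$-direction (\cref{thm:pmw_cyw}), the closure of cyclewidth under butterfly minors (\cref{thm:cw_minor_closed}), and McGuaig's correspondence between matching minors of bipartite matching covered graphs and butterfly minors of their $M$-directions (\cref{lemma:mcguigmatminors}). None of these requires any new machinery, so the proof is essentially a short computation with inequalities.

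First I would apply \cref{lemma:mcguigmatminors}: since $H$ is a matching minor of the bipartite matching covered graph $G$ (and $H$ is itself bipartite and matching covered by hypothesis), there exist perfect matchings $\Matching \in \perf{G}$ and $\Matching' \in \perf{H}$ such that $\dirm{H}{\Matching'}$ is a butterfly minor of $\dirm{G}{\Matching}$. Then \cref{thm:cw_minor_closed} gives $\cycWidth{\dirm{H}{\Matching'}} \leq \cycWidth{\dirm{G}{\Matching}}$.

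Next I would invoke \cref{thm:pmw_cyw} twice. Applied to $H$ with the perfect matching $\Matching'$ it yields $\pmw{H} \leq \cycWidth{\dirm{H}{\Matching'}}$, and applied to $G$ with the perfect matching $\Matching$ it yields $\cycWidth{\dirm{G}{\Matching}} \leq 2\pmw{G}$. Combining the three inequalities gives $\pmw{H} \leq \cycWidth{\dirm{H}{\Matching'}} \leq \cycWidth{\dirm{G}{\Matching}} \leq 2\pmw{G}$, which is the claim.

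Since every ingredient is already available, I do not expect a genuine obstacle here; the only points needing a line of care are that the statement presupposes $H$ to be matching covered (so that $\pmw{H}$ and the $M$-direction $\dirm{H}{\Matching'}$ are well defined), and that \cref{thm:pmw_cyw} is valid for an \emph{arbitrary} perfect matching of the graph, so in particular for the specific matchings $\Matching$ and $\Matching'$ delivered by \cref{lemma:mcguigmatminors}.
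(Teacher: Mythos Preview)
Your proof is correct and follows essentially the same route as the paper. The only cosmetic difference is that the paper invokes \cref{prop:pmw_Mpmw} and \cref{lem:Mpmw_equ_cyw} separately (which together yield \cref{thm:pmw_cyw}), whereas you cite the combined statement \cref{thm:pmw_cyw} directly; the chain of inequalities is otherwise identical.
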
	
	\begin{proof}
		Let $H$ be a matching minor of $G$.
		Then \cref{lemma:mcguigmatminors} provides the existence of perfect matchings $M\in\perf{G}$ and $M'\in\perf{H}$ such that $\dirm{H}{M'}$ is a butterfly minor of $\dirm{G}{M}$.
		The $M$-perfect matching width of $G$ is at most $2\pmw{G}$ by \cref{prop:pmw_Mpmw} and, by \cref{lem:Mpmw_equ_cyw}, $\Mpmw{M}{G}=\cycWidth{\dirm{G}{M}}$.
		Since $\dirm{H}{M'}$ is a butterfly minor of $\dirm{G}{M}$, \cref{thm:cw_minor_closed} gives us $\cycWidth{\dirm{H}{M'}}\leq\cycWidth{\dirm{G}{M}}$.
		At last, using \cref{lem:Mpmw_equ_cyw} again and combining the
		above inequalities we obtain $\pmw{H}\leq2\pmw{G}$.
	\end{proof}
	
	As we are going for a cylindrical grid and derive the grid theorem for bipartite matching covered graphs from the Directed Grid Theorem anyway, it makes sense to derive our grid from the directed case as well.
	The following definition defines the bipartite matching grid
        by providing a procedure that allows us to obtain it from the
        directed cylindrical grid. Let $E_o$ be a set of edges of the
        outermost cycle containing every second edge. Let $E_i$ be the
        set of edges of the innermost cycle containing every second
        edge such that for every $e_i\in E_i$ there is an $e_o\in E_o$
        such that the shortest path from the tail of $e_i$ to the tail
        of $e_o$ has length $k-1$. The last condition assures that the
        chosen edges in both cycles are not ``shifted''.   
	
	\begin{definition}[Bipartite Matching Grid]
		\label{def:bipMatGrid}
		Let $k\in\mathds{N}$ be a positive integer.
		Let $\CylGridRetract{k}$ be the digraph obtained from
                $\CylGrid{k}$ by butterfly contracting every edge from
                $E_o$ and every edge from $E_i$.
		The \emph{bipartite matching grid} of order $k$ is the unique bipartite matching covered graph $\MatGrid{k}$ that has a perfect matching $M\in\perf{\MatGrid{k}}$ such that $\dirm{\MatGrid{k}}{M}=\CylGridRetract{k}$.
	\end{definition}
	
	The uniqueness of $\MatGrid{k}$ and $M$ follows from \cref{obs:strongmdirections}.
	As an example see \cref{fig:matgrid}.
	Here we construct $\MatGrid{3}$ from $\CylGrid{3}$.
	The edges from $E_o\cup E_i$ are marked.
	
	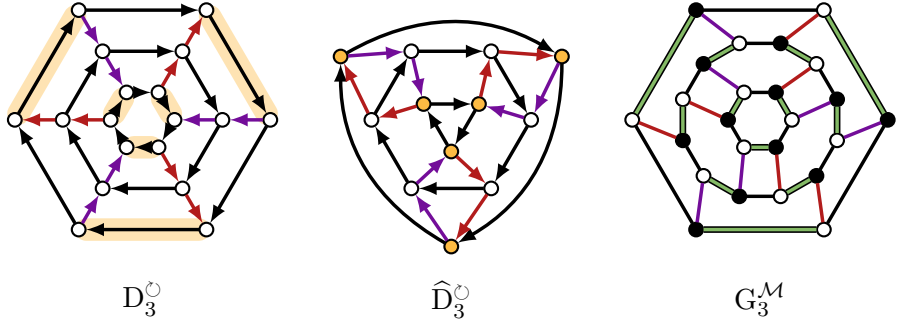
\begin{figure}[t]
		\begin{center}
			\begin{tikzpicture}[scale=0.7]
			
			\pgfdeclarelayer{background}
			\pgfdeclarelayer{foreground}
			
			\pgfsetlayers{background,main,foreground}
			
			\tikzstyle{v:main} = [draw, circle, scale=0.5, thick,fill=white]
			\tikzstyle{v:border} = [draw, circle, scale=0.75, thick,minimum size=10.5mm]
			\tikzstyle{v:mainfull} = [draw, circle, scale=1, thick,fill]
			\tikzstyle{v:ghost} = [inner sep=0pt,scale=1]
			\tikzstyle{v:marked} = [circle, scale=1.5, fill=AO,opacity=0.3]
			
			\tikzset{>=latex} 
			\tikzstyle{e:marker} = [line width=8.5pt,line cap=round,opacity=0.4,color=Dandelion]
			\tikzstyle{e:colored} = [line width=2.8pt,color=matEdgeColour]
			\tikzstyle{e:coloredthin} = [line width=1.1pt,opacity=0.8]
			\tikzstyle{e:coloredborder} = [line width=2pt]
			\tikzstyle{e:main} = [line width=1.3pt]
			\tikzstyle{e:extra} = [line width=1.3pt,color=LavenderGray]
			
			\begin{pgfonlayer}{main}
			
			\node (C) [] {};
			
			\node (C1) [v:ghost, position=180:58mm from C] {};
			\node (L1) [v:ghost, position=270:34mm from C1,align=center] {$\CylGrid{3}$};
			
			\node (C2) [v:ghost, position=0:0mm from C] {};
			\node (L2) [v:ghost, position=270:34mm from C2,align=center] {$\CylGridRetract{3}$};
			
			\node (C3) [v:ghost, position=0:58mm from C] {};
			\node (L3) [v:ghost, position=270:34mm from C3,align=center] {$\MatGrid{3}$};

			
			
			\node (C1v1) [v:main,position=180:6mm from C1] {};
			\node (C1v2) [v:main,position=120:6mm from C1] {};
			\node (C1v3) [v:main,position=60:6mm from C1] {};
			\node (C1v4) [v:main,position=0:6mm from C1] {};
			\node (C1v5) [v:main,position=300:6mm from C1] {};
			\node (C1v6) [v:main,position=240:6mm from C1] {};
			
			\node (C1v7) [v:main,position=180:15mm from C1] {};
			\node (C1v8) [v:main,position=120:15mm from C1] {};
			\node (C1v9) [v:main,position=60:15mm from C1] {};
			\node (C1v10) [v:main,position=0:15mm from C1] {};
			\node (C1v11) [v:main,position=300:15mm from C1] {};
			\node (C1v12) [v:main,position=240:15mm from C1] {};
			
			\node (C1v13) [v:main,position=180:24mm from C1] {};
			\node (C1v14) [v:main,position=120:24mm from C1] {};
			\node (C1v15) [v:main,position=60:24mm from C1] {};
			\node (C1v16) [v:main,position=0:24mm from C1] {};
			\node (C1v17) [v:main,position=300:24mm from C1] {};
			\node (C1v18) [v:main,position=240:24mm from C1] {};
			
			
			
			\node(C2v1und6) [v:main,position=150:6mm from C2,fill=Dandelion] {};
			\node(C2v5und4) [v:main,position=270:6mm from C2,fill=Dandelion] {};
			\node(C2v3und2) [v:main,position=30:6mm from C2,fill=Dandelion] {};
			
			\node (C2v7) [v:main,position=120:15mm from C2] {};
			\node (C2v8) [v:main,position=60:15mm from C2] {};
			\node (C2v9) [v:main,position=0:15mm from C2] {};
			\node (C2v10) [v:main,position=300:15mm from C2] {};
			\node (C2v11) [v:main,position=240:15mm from C2] {};
			\node (C2v12) [v:main,position=180:15mm from C2] {};
			
			\node(C2v13und18) [v:main,position=150:24mm from C2,fill=Dandelion] {};
			\node(C2v17und16) [v:main,position=270:24mm from C2,fill=Dandelion] {};
			\node(C2v15und14) [v:main,position=30:24mm from C2,fill=Dandelion] {};
			
			
			
			\node (C3v1) [v:main,position=120:6mm from C3] {};
			\node (C3v2) [v:main,position=60:6mm from C3,fill=black] {};
			\node (C3v3) [v:main,position=0:6mm from C3] {};
			\node (C3v4) [v:main,position=300:6mm from C3,fill=black] {};
			\node (C3v5) [v:main,position=240:6mm from C3] {};
			\node (C3v6) [v:main,position=180:6mm from C3,fill=black] {};
			
			\node (C3v7) [v:main,position=135:15mm from C3,fill=black] {};
			\node (C3v8) [v:main,position=105:15mm from C3] {};
			\node (C3v9) [v:main,position=75:15mm from C3,fill=black] {};
			\node (C3v10) [v:main,position=45:15mm from C3] {};
			\node (C3v11) [v:main,position=15:15mm from C3,fill=black] {};
			\node (C3v12) [v:main,position=345:15mm from C3] {};
			\node (C3v13) [v:main,position=315:15mm from C3,fill=black] {};
			\node (C3v14) [v:main,position=285:15mm from C3] {};
			\node (C3v15) [v:main,position=255:15mm from C3,fill=black] {};
			\node (C3v16) [v:main,position=225:15mm from C3] {};
			\node (C3v17) [v:main,position=195:15mm from C3,fill=black] {};
			\node (C3v18) [v:main,position=165:15mm from C3] {};
			
			\node (C3v19) [v:main,position=120:24mm from C3,fill=black] {};
			\node (C3v20) [v:main,position=60:24mm from C3] {};
			\node (C3v21) [v:main,position=0:24mm from C3,fill=black] {};
			\node (C3v22) [v:main,position=300:24mm from C3] {};
			\node (C3v23) [v:main,position=240:24mm from C3,fill=black] {};
			\node (C3v24) [v:main,position=180:24mm from C3] {};
			
			

			
			
			\draw (C1v1) [e:main,->] to (C1v2);
			\draw (C1v2) [e:main,->] to (C1v3);
			\draw (C1v3) [e:main,->] to (C1v4);
			\draw (C1v4) [e:main,->] to (C1v5);
			\draw (C1v5) [e:main,->] to (C1v6);
			\draw (C1v6) [e:main,->] to (C1v1);
			
			\draw (C1v7) [e:main,->] to (C1v8);
			\draw (C1v8) [e:main,->] to (C1v9);
			\draw (C1v9) [e:main,->] to (C1v10);
			\draw (C1v10) [e:main,->] to (C1v11);
			\draw (C1v11) [e:main,->] to (C1v12);
			\draw (C1v12) [e:main,->] to (C1v7);
			
			\draw (C1v13) [e:main,->] to (C1v14);
			\draw (C1v14) [e:main,->] to (C1v15);
			\draw (C1v15) [e:main,->] to (C1v16);
			\draw (C1v16) [e:main,->] to (C1v17);
			\draw (C1v17) [e:main,->] to (C1v18);
			\draw (C1v18) [e:main,->] to (C1v13);
			
			\draw (C1v14) [e:main,color=myViolet,->] to (C1v8);
			\draw (C1v8) [e:main,color=myViolet,->] to (C1v2);
			\draw (C1v16) [e:main,color=myViolet,->] to (C1v10);
			\draw (C1v10) [e:main,color=myViolet,->] to (C1v4);
			\draw (C1v18) [e:main,color=myViolet,->] to (C1v12);
			\draw (C1v12) [e:main,color=myViolet,->] to (C1v6);
			
			\draw (C1v1) [e:main,color=myRed,->] to (C1v7);
			\draw (C1v7) [e:main,color=myRed,->] to (C1v13);
			\draw (C1v3) [e:main,color=myRed,->] to (C1v9);
			\draw (C1v9) [e:main,color=myRed,->] to (C1v15);
			\draw (C1v5) [e:main,color=myRed,->] to (C1v11);
			\draw (C1v11) [e:main,color=myRed,->] to (C1v17);
			
			
			
			\draw (C2v1und6) [e:main,->] to (C2v3und2);
			\draw (C2v3und2) [e:main,->] to (C2v5und4);
			\draw (C2v5und4) [e:main,->] to (C2v1und6);
			
			\draw (C2v7) [e:main,->] to (C2v8);
			\draw (C2v8) [e:main,->] to (C2v9);
			\draw (C2v9) [e:main,->] to (C2v10);
			\draw (C2v10) [e:main,->] to (C2v11);
			\draw (C2v11) [e:main,->] to (C2v12);
			\draw (C2v12) [e:main,->] to (C2v7);
			
			\draw (C2v13und18) [e:main,->,bend left] to (C2v15und14);
			\draw (C2v15und14) [e:main,->,bend left] to (C2v17und16);
			\draw (C2v17und16) [e:main,->,bend left] to (C2v13und18);
			
			\draw (C2v15und14) [e:main,color=myRed,<-] to (C2v8);
			\draw (C2v8) [e:main,color=myRed,<-] to (C2v3und2);
			\draw (C2v17und16) [e:main,color=myRed,<-] to (C2v10);
			\draw (C2v10) [e:main,color=myRed,<-] to (C2v5und4);
			\draw (C2v13und18) [e:main,color=myRed,<-] to (C2v12);
			\draw (C2v12) [e:main,color=myRed,<-] to (C2v1und6);
			
			\draw (C2v1und6) [e:main,color=myViolet,<-] to (C2v7);
			\draw (C2v7) [e:main,color=myViolet,<-] to (C2v13und18);
			\draw (C2v3und2) [e:main,color=myViolet,<-] to (C2v9);
			\draw (C2v9) [e:main,color=myViolet,<-] to (C2v15und14);
			\draw (C2v5und4) [e:main,color=myViolet,<-] to (C2v11);
			\draw (C2v11) [e:main,color=myViolet,<-] to (C2v17und16);
			
			
			
			\draw (C3v1) [e:main] to (C3v2);
			\drawMatEdge{C3v2}{C3v3}
			\draw (C3v3) [e:main] to (C3v4);
			\drawMatEdge{C3v4}{C3v5}
			\draw (C3v5) [e:main] to (C3v6);
			\drawMatEdge{C3v6}{C3v1}
			
			\drawMatEdge{C3v7}{C3v8}
			\draw (C3v8) [e:main] to (C3v9);
			\drawMatEdge{C3v9}{C3v10}
			\draw (C3v10) [e:main] to (C3v11);
			\drawMatEdge{C3v11}{C3v12}
			\draw (C3v12) [e:main] to (C3v13);
			\drawMatEdge{C3v13}{C3v14}
			\draw (C3v14) [e:main] to (C3v15);
			\drawMatEdge{C3v15}{C3v16}
			\draw (C3v16) [e:main] to (C3v17);
			\drawMatEdge{C3v17}{C3v18}
			\draw (C3v18) [e:main] to (C3v7);
			
			\draw (C3v19) [e:main] to (C3v20);
			\drawMatEdge{C3v20}{C3v21}
			\draw (C3v21) [e:main] to (C3v22);
			\drawMatEdge{C3v22}{C3v23}
			\draw (C3v23) [e:main] to (C3v24);
			\drawMatEdge{C3v24}{C3v19}
			
			\draw (C3v2) [e:main,color=myRed] to (C3v10);
			\draw (C3v9) [e:main,color=myRed] to (C3v20);
			\draw (C3v4) [e:main,color=myRed] to (C3v14);
			\draw (C3v13) [e:main,color=myRed] to (C3v22);
			\draw (C3v6) [e:main,color=myRed] to (C3v18);
			\draw (C3v17) [e:main,color=myRed] to (C3v24);
			
			\draw (C3v1) [e:main,color=myViolet] to (C3v7);
			\draw (C3v8) [e:main,color=myViolet] to (C3v19);
			\draw (C3v3) [e:main,color=myViolet] to (C3v11);
			\draw (C3v12) [e:main,color=myViolet] to (C3v21);
			\draw (C3v5) [e:main,color=myViolet] to (C3v15);
			\draw (C3v16) [e:main,color=myViolet] to (C3v23);
			
			
			
			\end{pgfonlayer}
			
			
			\begin{pgfonlayer}{background}
			
			\draw (C1v1) [e:marker] to (C1v2);
			\draw (C1v3) [e:marker] to (C1v4);
			\draw (C1v5) [e:marker] to (C1v6);
			
			\draw (C1v13) [e:marker] to (C1v14);
			\draw (C1v15) [e:marker] to (C1v16);
			\draw (C1v17) [e:marker] to (C1v18);
			
			\end{pgfonlayer}	
			
			\begin{pgfonlayer}{foreground}

			\end{pgfonlayer}
			\end{tikzpicture}
		\end{center}
		\caption{The construction of the bipartite matching grid of order $3$ from the (directed) cylindrical grid of order $3$}
		\label{fig:matgrid}
	\end{figure}
	
	\begin{lemma}
		\label{thm:mat_grid_high_pmw}
		$\pmw{\MatGrid{n}}\ge \frac{1}{3}k$.
	\end{lemma}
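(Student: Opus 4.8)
\textit{Plan.} Everything needed to prove this is already in place, and the statement reduces to a lower bound on the cyclewidth of $\CylGridRetract{k}$. By \cref{def:bipMatGrid} the graph $\MatGrid{k}$ is bipartite and matching covered and carries a perfect matching $M$ with $\dirm{\MatGrid{k}}{M}=\CylGridRetract{k}$, so \cref{thm:pmw_cyw} gives
\[
  \pmw{\MatGrid{k}}\ \ge\ \tfrac12\,\cycWidth{\dirm{\MatGrid{k}}{M}}\ =\ \tfrac12\,\cycWidth{\CylGridRetract{k}}.
\]
Thus it suffices to show $\cycWidth{\CylGridRetract{k}}\ge\tfrac23 k$, and I would obtain this exactly as \cref{thm:cyl_grid_high_cw} is used elsewhere: by exhibiting a large cylindrical grid as a butterfly minor of $\CylGridRetract{k}$ and applying \cref{cor:gridminor_highCycW}.

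\textit{The butterfly minor.} Label the concentric cycles of $\CylGrid{k}$ as $C_1,\dots,C_k$ from innermost to outermost. By \cref{def:bipMatGrid}, $\CylGridRetract{k}$ is obtained from $\CylGrid{k}$ by butterfly contracting the alternating edge set $E_o\subseteq\E{C_k}$ and the alternating edge set $E_i\subseteq\E{C_1}$; each such contraction merges two vertices of the outermost, respectively innermost, cycle only. In particular the $k-2$ middle cycles $C_2,\dots,C_{k-1}$, and all path edges running strictly between $C_2$ and $C_{k-1}$, survive unchanged in $\CylGridRetract{k}$. I would let $H\subseteq\CylGridRetract{k}$ be the subgraph consisting of $C_2,\dots,C_{k-1}$ together with $2(k-2)$ of the $2k$ connecting paths, chosen as $2(k-2)$ consecutive ones around the cylinder so that inward and outward paths still alternate. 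On each $C_i$ the four unused attachment vertices now have degree two and merely subdivide a cycle edge, so $H$ is obtained from the cylindrical grid of order $k-2$ by subdividing cycle edges. Every such subdivision vertex $v$ lies on a directed subpath $u\to v\to w$ of $H$ and hence has out-degree one, so $(v,w)$ is butterfly-contractible; contracting all of them turns $H$ into $\CylGrid{k-2}$. Therefore $\CylGrid{k-2}$ is a butterfly minor of $\CylGridRetract{k}$, and \cref{cor:gridminor_highCycW} yields $\cycWidth{\CylGridRetract{k}}\ge\tfrac23(k-2)$, hence $\pmw{\MatGrid{k}}\ge\tfrac13(k-2)$.

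\textit{Sharpening, and the main obstacle.} The route above costs a small additive loss; the sharp constant $\tfrac13 k$ is obtained by running the balanced-cut argument from the proof of \cref{thm:cyl_grid_high_cw} directly on $\CylGridRetract{k}$, which still has $k$ concentric cycles. One picks an edge of an optimal cycle decomposition whose induced cut $\cut{e}$ is balanced; if both shores fully contain a concentric cycle, then since $k$ inward and $k$ outward path-segments still connect any two concentric cycles after the contractions, a single directed cycle can be built crossing $\cut{e}$ at least $2k$ times; otherwise at least roughly $\tfrac k3$ of the $k$ concentric cycles cross $\cut{e}$, each in at least two edges, so $\cycPor{\cut{e}}\ge\tfrac23 k$. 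I expect the real work in either approach to be the structural bookkeeping for $\CylGridRetract{k}$: verifying carefully that butterfly contracting $E_o$ and $E_i$ leaves the grid structure on $C_2,\dots,C_{k-1}$ intact, that exactly $k$ inward and $k$ outward path-segments survive between consecutive cycles, and — for the sharp count — that the two shortened cycles do not spoil the accounting in the unbalanced case. The remaining steps are routine.
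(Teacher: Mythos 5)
Your proposal is correct in spirit and reduces the lemma the same way the paper does: to a lower bound on $\cycWidth{\CylGridRetract{k}}$ followed by an application of \cref{thm:pmw_cyw}. The difference is in how that lower bound is obtained. The paper simply asserts that $\CylGridRetract{k}$ still has cyclewidth at least $\tfrac23 k$, giving as justification only that butterfly contraction creates no new directed cycles --- a remark that on its own yields only the \emph{upper} bound $\cycWidth{\CylGridRetract{k}}\le\cycWidth{\CylGrid{k}}$ via \cref{thm:cw_minor_closed}, so the paper is implicitly relying on the reader re-running the balanced-cut argument of \cref{thm:cyl_grid_high_cw} on the contracted grid (which is exactly your ``sharpening'' route, and which after careful vertex counting for the two shortened rings gives $\tfrac23(k-1)$ rather than a clean $\tfrac23 k$). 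Your primary route instead exhibits $\CylGrid{k-2}$ as a butterfly minor of $\CylGridRetract{k}$ by taking the $k-2$ untouched middle rings and $2(k-2)$ consecutive radial paths (an even number removed, so the in/out alternation survives) and suppressing the resulting degree-two subdivision vertices; combined with \cref{cor:gridminor_highCycW} this gives $\pmw{\MatGrid{k}}\ge\tfrac13(k-2)$. That constant is slightly weaker than the stated $\tfrac13 k$, which you honestly flag, but the argument is cleaner because it reuses \cref{thm:cw_minor_closed} and \cref{cor:gridminor_highCycW} verbatim rather than re-doing the balanced-cut analysis on a graph with rings of unequal size. Both routes prove the qualitatively correct statement; neither yields $\tfrac13 k$ exactly without some generosity about small additive terms, and the paper's own proof is if anything the less rigorous of the two.
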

	\begin{proof}
		Let $k\in\mathds{N}$ be a positive integer and $\CylGrid{k}$ the cylindrical grid of order $k$.
		By \cref{thm:cyl_grid_high_cw}, $\frac{1}{3}k\le\frac{1}{2}\cycWidth{\CylGrid{k}}$.
		Then the graph $\CylGridRetract{k}$ obtained from $\CylGrid{k}$ by contracting every second edge in the innermost and the outermost of its consecutive cycles as in the definition of the bipartite matching grid also has \cyclewidth at most $\frac{2}{3}k$ as we are only contracting already contractible edges and therefore do not create new directed cycles in our graph.
		By applying \cref{thm:pmw_cyw}, we obtain $\frac{1}{3}k\le\frac{1}{2}\cycWidth{\CylGridRetract{k}}\leq\pmw{\MatGrid{k}}$.
	\end{proof}
	
	With this last observation at hand, we can derive our main
	theorem. Assume that a graph $G$ has high  perfect matching
	width. By \cref{cor:cywgridthm} and \cref{thm:pmw_cyw} this implies high \cyclewidth for all $M$-directions of~$G$. This, in turn, implies large cylindrical
	grids as butterfly minors on those $M$-directions.
	Now \cref{lemma:mcguigmatminors} lets us translate these cylindrical grids into matching minors of $G$ and as the perfect matching width of $G$ is bounded from below by the width of its matching minors as we have observed in \cref{thm:pmw_minor_closed}, we obtain the grid theorem for bipartite matching covered graphs.
	
	\begin{theorem}
		There is a function $f\colon\N\rightarrow\N$ such that every matching
		covered bipartite graph $G$ either satisfies $\pmw{G}\leq f(k)$, or
		contains the bipartite matching grid of order $k$ as a matching minor.
	\end{theorem}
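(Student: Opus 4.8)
The plan is to assemble the theorem from the equivalences established in this paper, taking $f$ to be the function supplied by the \cyclewidth grid theorem, \cref{cor:cywgridthm}. Concretely, let $g\colon\N\to\N$ be the function of \cref{cor:cywgridthm} and put $f\coloneqq g$. I would prove the contrapositive: if $G$ is a matching covered bipartite graph with $\pmw{G} > f(k)$, then $G$ contains $\MatGrid{k}$ as a matching minor.

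First, since $G$ is matching covered it has a perfect matching; fix any $\Matching\in\perf{G}$ and set $D\coloneqq\dirm{G}{\Matching}$, which is strongly connected by \cref{obs:strongmdirections}. By \cref{thm:pmw_cyw} we have $\cycWidth{D}\ge\pmw{G} > f(k) = g(k)$, so \cref{cor:cywgridthm} --- the Directed Grid Theorem transported to \cyclewidth --- yields that $D$ contains $\CylGrid{k}$ as a butterfly minor.

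Next I would pass from $\CylGrid{k}$ down to $\CylGridRetract{k}$. By \cref{def:bipMatGrid}, $\CylGridRetract{k}$ is obtained from $\CylGrid{k}$ by butterfly contracting the chosen ``every second'' edges $E_o$ on the outermost cycle and $E_i$ on the innermost cycle --- the choice of these edge sets, including the non-shift condition, being exactly what makes them butterfly-contractible --- so $\CylGridRetract{k}$ is a butterfly minor of $\CylGrid{k}$, and since the butterfly-minor relation is transitive, $\CylGridRetract{k}$ is a butterfly minor of $D=\dirm{G}{\Matching}$. By \cref{def:bipMatGrid} again, $\CylGridRetract{k}=\dirm{\MatGrid{k}}{\Matching'}$ for the canonical perfect matching $\Matching'\in\perf{\MatGrid{k}}$ granted by \cref{obs:strongmdirections}. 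Hence $\dirm{\MatGrid{k}}{\Matching'}$ is a butterfly minor of $\dirm{G}{\Matching}$, and \cref{lemma:mcguigmatminors} (McGuaig's correspondence) upgrades this to the statement that $\MatGrid{k}$ is a matching minor of $G$, which is the contrapositive we wanted.

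The step I expect to require the most care is not conceptual but is the bookkeeping of parameters: one must check that the order $k$ is not lost along the chain, which it is not, because $\CylGrid{k}$ retracts to $\CylGridRetract{k}$ whose bipartite lift $\MatGrid{k}$ has the same order $k$, and because \cref{thm:pmw_cyw} costs only a constant factor, so that $\pmw{G}$ and $\cycWidth{D}$ are mutually bounded. All the genuine work lives in the earlier results --- the sandwich $\pmw{G}\le\cycWidth{\dirm{G}{\Matching}}\le 2\pmw{G}$ obtained via the $\Matching$-perfect-matching-width intermediary, the equivalence of \cyclewidth with directed \treewidth, and \cref{lemma:mcguigmatminors}. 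I would finish with two remarks. First, combining \cref{thm:pmw_minor_closed} with \cref{thm:mat_grid_high_pmw} gives a converse bound: if $G$ has $\MatGrid{k}$ as a matching minor then $\pmw{G}\ge\tfrac{1}{2}\pmw{\MatGrid{k}}\ge\tfrac{k}{6}$, so the bipartite matching grid is a genuine obstruction and the dichotomy is tight up to the function $f$. Second, every step above is algorithmic, so together with \cref{cor:approx_pmw} the argument also yields a routine that, on a bipartite matching covered $G$, either certifies $\pmw{G}\le f(k)$ or exhibits $\MatGrid{k}$ as a matching minor.
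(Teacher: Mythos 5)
Your proposal is correct and takes essentially the same route as the paper: $\pmw{G}$ large implies $\cycWidth{\dirm{G}{M}}$ large via \cref{thm:pmw_cyw}, which yields $\CylGrid{k}$ as a butterfly minor via \cref{cor:cywgridthm}, then passing to $\CylGridRetract{k}$ by transitivity of butterfly minors, and finally applying \cref{lemma:mcguigmatminors} to get $\MatGrid{k}$ as a matching minor. If anything, your write-up is more explicit than the paper's terse sketch about the intermediate step through $\CylGridRetract{k}$, and you correctly identify that the paper's invocation of \cref{thm:pmw_minor_closed} pertains to the converse bound showing the obstruction is genuine rather than being needed for the forward implication.
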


	\bibliographystyle{alphaurl}
	\bibliography{literature}

\end{document}